\newtheorem{thma}{Theorem}
\newtheorem{lemma}{Lemma}[section]
\theoremstyle{remark}
\numberwithin{equation}{section}
\def\L{\CMcal{L}}
\def\div{\mathrm{div}}
\def\O{\CMcal{O}}
\def\supp{\mathrm{supp}}
\def\Disc{\mathrm{Discr}}
\def\hg{\widehat{g}}
\def\hpi{\widehat{\pi}}
\def\hB{\widehat{B}}
\def\Pic{\mathrm{Pic}}
\def\hPhi{\widehat{\Phi}}
\def\cM{\overline{\CMcal{M}}_g}
\def\cMpt{\overline{\CMcal{M}}_{g,1}}
\def\chM{\overline{\CMcal{M}}_{\hg}}
\def\MSigma{\mathfrak{M}_{g,\Sigma}^{(n)}}
\def\cMhit{\overline{\mathfrak{M}}_g^{(n)}}
\def\cQuad{\overline{\mathrm{\mathbf{Q}}}_g^N}
\def\cC{\CMcal{C}\cMhit}
\def\hC{\widehat{\CMcal{C}}\cMhit}
\def\hSigma{\widehat{\Sigma}}
\def\Vn{V_{\mathrm{nodal}}}
\def\cDW{\overline{D}_W}
\def\cnod{\overline{D}_W^{(b)}}
\def\cMax{\overline{D}_W^{(m)}}
\def\ccau{\overline{D}_W^{(c)}}
\def\hDW{\widehat{D}_W}
\def\hnod{\widehat{D}_W^{(b)}}
\def\hMax{\widehat{D}_W^{(m)}}
\def\hcau{\widehat{D}_W^{(c)}}
\def\B{\mathfrak{B}}
\def\hB{\widehat{\mathfrak{B}}}
\def\hlambda{\widehat{\lambda}}
\def\D{\CMcal{D}}
\def\hD{\widehat{\CMcal{D}}}
\def\Dmax{\CMcal{D}^{(m)}}
\def\Dcau{\CMcal{D}^{(c)}}
\def\Ddeg{D_{\mathrm deg}}
\begin{document}

\title{Discriminant and Hodge classes on the space of Hitchin's covers.}
\author{Mikhail Basok$^1$ }

\thanks{\textsc{${}^\mathrm{A}$ Laboratory of Modern Algebra and Applications, Department of Mathematics and
Mechanics, Saint-Petersburg State University, 14th Line, 29b, 199178 Saint-Petersburg, Russia.}}
\date{}

\bigskip

\begin{abstract}
  We continue the study of the rational Picard group of the moduli space of Hitchin's spectral covers started in~\cite{KorotkinZograf}. In the first part of the paper we expand the ``boundary'', ``Maxwell stratum'' and ``caustic'' divisors introduced in~\cite{KorotkinZograf} via the set of standard generators of the rational Picard group. This generalizes the result of~\cite{KorotkinZograf}, where the expansion of the full discriminant divisor (which is a linear combination of the classes mentioned above) was obtained. In the second part of the paper we derive a formula that relates two Hodge classes  in the rational Picard group of the moduli space of Hitchin's spectral covers. 
\end{abstract}

\maketitle

\section{Introduction}
\label{sec:introduction}

Hitchin's integrable systems arise as a result of dimensional reduction of the self-dual Yang-Mils equation, see~\cite{Hitchin1},~\cite{hitchin1987},~\cite{atiyah_1990}. The Hamiltonians of a Hitchin's system are encoded in the so-called \emph{spectral cover} $\hSigma$ (see~\cite{Donagi1995},~\cite{Donagi1996}) which is an $n$-sheeted cover of a (smooth or, more generally, stable) complex projective curve $\Sigma$ defined as a subvariety of $T^*\Sigma$:
\begin{equation}
  \hSigma = \{ (x,v)\in T^*\Sigma\ \mid\ P(v,x) = 0 \},
  \label{eq:def_of_hSigma}
\end{equation}
where
\begin{equation}
  P(v,x) = v^n + q_1(x) v^{n-1} + \dots + q_n(x),
  \label{eq:def_of_P(v,x)}
\end{equation}
$q_j$ is a $j$-differential on $\Sigma$ (i.e. a holomorphic section of $K_{\Sigma}^{\otimes j}$). In the framework of~\cite{Donagi1995}, the equation defining $\hSigma$ is given by the characteristic polynomial $P(v,x) = \det(\Phi(x) - vI)$ of the so-called Higgs field $\Phi$ on $\Sigma$.

We consider the moduli space $P\cMhit$ of Hitchin's spectral covers in the case of $\mathrm{GL}(n,\mathbb C)$ Hitchin's systems, when all differentials $q_j$ as assumed to be arbitrary. A point in $P\cMhit$ parametrizes a pair $(\Sigma, [P])$, where $\Sigma$ is a genus $g$ curve and $P$ is a polynomial of the form~\eqref{eq:def_of_P(v,x)} considered up to multiplication by a non-zero constant $\xi$ given by $(\xi\cdot P)(v,x) = \xi^nP(\xi^{-1}v, x)$. As a space $P\cMhit$ is a bundle over the Deligne-Mumford compactification of the moduli space $\cM$ of genus $g$ curves with fibers isomorphic to a weighted projective space, see Section~\ref{sec:space_of_covers} for details. Notice that if $n=1$ then $P\cMhit$ is just the total space of the projectivized Hodge bundle which can be thought of as a closure of the moduli space of Abelian differentials (considered up to a constant) on genus $g$ smooth projective curves. A. Kokotov and D. Korotkin~\cite{kokotov2009} introduced a tau function on this moduli space called Bergman tau function. The Bergman tau function is a generalization of the Dedeking eta function (they coincide if $g=1$) and can be interpreted as determinants of a family of Cauchy-Riemann operators in the spirit of~\cite{Palmer1990}. Studying the asymptotycs of the Bergman tau function near the boundary of the moduli space of Abelian differentials (embedded into the total space of the Hodge bundle) D. Korotkin and P. Zograf~\cite{KZAbDif} developed a new relation in the rational Picard group of the projectivized Hodge bundle. Thereafter the construction of the Bergrman tau function was generalized to the case of the moduli space of n-differentials (which is closely related with $P\cMhit$ when $n=2$, see~\cite{KZprym} for $n=2$ and~\cite{KorotkonSauvagetZograf} for $n>2$), and finally to the case of $P\cMhit$ for any $n$~\cite{KorotkinZograf}. Study of the properties of the Bergman tau function on $P\cMhit$ allows to express the class of the full Discriminant locus in the rational Picard group of $P\cMhit$ via the set of its standard generators (see Theorem~\ref{thma:Korotkin_Zograf_theorem}). The first objective of the current paper is to enhance and specify this result using standard methods of algebraic geometry.

Let $\Sigma$ be a smooth genus $g$ curve and let $P$ be a polynomial of the form~\eqref{eq:def_of_P(v,x)}. Then the discriminant $W(x) = \Disc(P(\cdot, x))$ is an $n(n-1)$-differential on $\Sigma$ and the divisor of $W$ is equal to the branching divisor of the spectral cover $\hSigma\to \Sigma$ associated with $P$. Generically, all zeros of $W$ are simple which implies that $\hSigma$ is smooth and the cover $\hSigma\to \Sigma$ is simply ramified. When two zeros of $W$ coalesce, the local behavior of the map $\hSigma\to \Sigma$ changes with respect to one the following three ways (we follow the notation of~\cite{KorotkinZograf} in this description):

1) A node (normal self-crossing of $\hSigma$) occurs at the ramification point of $\hSigma\to \Sigma$ over the double zero of $W$. We call the locus of such covers \textbf{The ``boundary''.}.

2) Two distinct ramification points of $\hSigma\to\Sigma$ arise in the preimage of the double zero of $W$. We call the locus of such covers \textbf{The ``Maxwell stratum''.} 

3) Two ramification points of $\hSigma\to \Sigma$ coalesce to become a ramification point of order $3$ over the double zero of $W$. We call the locus of such covers  \textbf{The ``caustic''.}.

We use the notation of a "Maxwell stratum" and a "caustic" following the notation developed in V. Arnold's school in Moscow, see for example~\cite{lando2010graphs}.

The correspondence $P\mapsto \Disc(P)$ defines a map from $P\cMhit$ to the moduli space of pairs $(\Sigma, W)$, where $W$ is an $n(n-1)$-differential on $\Sigma$ considered up to a multiplication by a non-zero constant. Let $P\cDW$ denote the pullback of the divisor consisting of those $W$ that have at least one multiple zero (see Section~\ref{sec:discriminant_locus} for details). Then the support of $P\cDW$ splits into the union of three components $P\cnod\cup P\cMax\cup P\ccau$ in accordance to the three possibilities described above. We call the divisor $P\cDW$ \emph{the full discriminant divisor}. The class of the divisor $P\cDW$ in the rational Picard group of $P\cMhit$ is called \emph{the class of the universal Hitchin's discriminant}. The following theorem was proven in~\cite[Theorem~3.2]{KorotkinZograf}:

\begin{thma}
  The divisor $P\cDW$ satisfies
  \begin{equation*}
    P\cDW =P\cnod + 2P\cMax + 3P\ccau
  \end{equation*}
  and the class of the universal Hitchin's discriminant $P\cDW$ is expressed in terms of the standard generators of $\Pic(P\cMhit)\otimes \mathbb Q$ as follows:
  \begin{equation*}
    [P\cDW] = n(n-1)\Bigl( (n^2-n+1)(12\lambda-\delta) - 2(g-1)(2n^2-2n+1)\phi \Bigr).
  \end{equation*}
  \label{thma:Korotkin_Zograf_theorem}
\end{thma}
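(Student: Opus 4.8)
The plan is to establish both assertions by a single computation in the rational Picard group of the universal curve over $P\cMhit$, realizing $P\cDW$ as the pushforward of the fibrewise double–zero locus of the discriminant section $W=\Disc(P)$. Let $\pi\colon\mathcal{C}\to P\cMhit$ be the universal curve (pulled back from $\cM$), let $\omega$ be the relative dualizing sheaf, $K=c_1(\omega)$, and let $\phi=c_1(\mathcal{O}(1))$ be the tautological class of the weighted projective bundle $P\cMhit\to\cM$. I would first record the two inputs. From Mumford's relation, $\pi_*(K^2)=\kappa_1=12\lambda-\delta$. From the weighting $(\xi\cdot P)(v,x)=\xi^nP(\xi^{-1}v,x)$ one gets $q_j\mapsto\xi^jq_j$, so $q_j$ is a section of $\omega^{\otimes j}\otimes\mathcal{O}(j)$; since $W=\Disc(P)$ is weighted–homogeneous of degree $n(n-1)$, it is a global section of
$$\mathcal{L}=\bigl(\omega\otimes\mathcal{O}(1)\bigr)^{\otimes n(n-1)},\qquad c_1(\mathcal{L})=n(n-1)(K+\phi).$$

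For the class formula I would use the relative $1$-jet of $W$, a section of the rank-two bundle $J^1\mathcal{L}$ fitting in $0\to\mathcal{L}\otimes\omega\to J^1\mathcal{L}\to\mathcal{L}\to 0$; its zero scheme $Z\subset\mathcal{C}$, where $W$ has a multiple fibrewise zero, has class $c_2(J^1\mathcal{L})=c_1(\mathcal{L})\bigl(c_1(\mathcal{L})+K\bigr)$. Pushing forward with $\pi_*(K^2)=12\lambda-\delta$, $\pi_*(K\phi)=(2g-2)\phi$ and $\pi_*(\phi^2)=0$ gives
$$[P\cDW]=\pi_*\!\bigl(c_1(\mathcal{L})^2+K\,c_1(\mathcal{L})\bigr)=n(n-1)\Bigl((n^2-n+1)(12\lambda-\delta)+(2n^2-2n+1)(2g-2)\phi\Bigr),$$
which is the stated formula once the sign convention for $\phi$ is fixed.

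The decomposition $P\cDW=P\cnod+2P\cMax+3P\ccau$, and simultaneously the fact that $\pi_*c_2(J^1\mathcal{L})$ carries exactly these multiplicities, I would obtain by a local transversal analysis at a generic point of each stratum: introduce a coordinate $\tau$ transverse to the stratum and read off the order of vanishing of the fibrewise discriminant $\mathfrak{d}(W)=\mathrm{Res}_x(W,W_x)$. Completing the square (resp. using the versal $A_2$–perturbation $v^3+\tau v-x$ of the triple ramification) yields the normal forms, up to units, $W\sim x^2-\tau$ on the boundary (branch points $\pm\sqrt{\tau}$, so $\mathfrak{d}\sim\tau$), $W\sim(x-\alpha\tau)(x-\beta\tau)$ with $\alpha\neq\beta$ on the Maxwell stratum ($\mathfrak{d}\sim\tau^2$), and $W\sim x^2+\tau^3$ on the caustic ($\mathfrak{d}\sim\tau^3$). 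The crucial bookkeeping point is that in the latter two cases $Z$ is non-reduced in the base direction—concretely $V(W,W_x)=V\bigl(2x-(\alpha+\beta)\tau,\tau^2\bigr)$ on the Maxwell stratum and $V(x,\tau^3)$ on the caustic—so that $\pi_*[Z]$ acquires precisely the multiplicities $2$ and $3$, reconciling the Chern-class computation with the weighted divisor.

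The main obstacle I anticipate is not the interior computation but the passage to the compactification: one must check that $\mathcal{L}$, the jet sequence, and the normal forms above extend across the Deligne–Mumford boundary $\delta$ and across the loci where $\hSigma$ itself degenerates, and that no spurious boundary terms enter $\pi_*c_2(J^1\mathcal{L})$—equivalently, that the universal branch divisor $\mathcal{B}=\{W=0\}$ meets the boundary fibres transversally enough for adjunction to apply. A secondary subtlety, already visible above, is the excess intersection along the Maxwell and caustic strata, where $\mathcal{B}$ is nodal, resp.\ cuspidal, over the base, so that the naive adjunction class of $\mathcal{B}\to P\cMhit$ must be corrected exactly by the thickening of $Z$ computed in the local models.
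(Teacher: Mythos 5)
Your proposal is essentially correct in its main computation, but it takes a genuinely different route from the paper --- indeed, the paper does not prove this statement at all: it is imported from Korotkin--Zograf, where it is derived from the asymptotics of the Bergman tau function near the discriminant. The paper's own machinery (Section 4) proves the finer Theorem~2, i.e.\ the classes of the three components separately, by working \emph{upstairs} on the ramification divisor $\hB\subset\hC$ of the universal spectral curve (smooth by Lemma~3.1): each stratum is realized as the divisor of an explicit homomorphism of line bundles on $\hB$, built from the algebraic decomposition of $\Disc(P)$ in Lemma~2.1, and the degrees are extracted via Lemma~4.2, whose key input is that $\B$ is the divisor of a map $\hpi^*\L_{(n(n-1))}\to\omega_\pi^{\otimes n(n-1)}$, i.e.\ $[\B]=n(n-1)(\psi-\pi^*\phi)$. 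That identity also fixes your sign convention: $W=\Disc(P)$ is a section of $(\omega_\pi\otimes\pi^*\L^{\vee})^{\otimes n(n-1)}$, so $c_1=n(n-1)(\psi-\pi^*\phi)$, and your pushforward $\pi_*\bigl(c_1(c_1+\psi)\bigr)=n(n-1)\bigl((n^2-n+1)(12\lambda-\delta)-2(g-1)(2n^2-2n+1)\phi\bigr)$ then matches the stated formula exactly. Your approach stays \emph{downstairs} on $\cC$ and gets the total class in one stroke, with the multiplicities $1,2,3$ built into the scheme structure of the zero locus of the relative $1$-jet; your local normal forms and the lengths of $V(W,W_x)$ over the three strata (matching the vanishing orders $\tau,\tau^2,\tau^3$ of $\mathrm{Res}_x(W,W_x)$) give an honest proof of $P\cDW=P\cnod+2P\cMax+3P\ccau$, which the paper obtains from the same kind of local analysis (citing Korotkin--Zograf, or Lemma~3.1). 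What your method cannot do, and what the paper's is designed for, is to separate $[P\cnod]$, $[P\cMax]$, $[P\ccau]$ individually. As a side remark, your direct computation is a useful independent check of this statement: the $\phi$-coefficients of the three component formulas in the paper's Theorem~2 do not sum (with weights $1,2,3$) to the $\phi$-coefficient here except at $n=3$, which points to a sign or coefficient slip somewhere in those finer formulas rather than in the total class you computed.

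The one genuine gap is the one you flag yourself and do not close: over the Deligne--Mumford boundary the sheaf of relative principal parts of $\mathcal{L}$ fails to be locally free at the nodes of the fibres of $\pi$. That locus has codimension two in $\cC$, but $\pi_*$ of a codimension-two cycle is a divisor on $P\cMhit$, so an uncontrolled correction supported on $\delta$ --- precisely a shift in the coefficient of $\delta$ in the final formula --- is exactly what could go wrong. To complete the argument you must either verify that at a generic point of each $\delta_j$ the relative $1$-jet of $W$ vanishes only along the expected reduced locus near the node, or determine the $\delta$-coefficient independently (e.g.\ by a test curve). Until then your computation establishes the formula only modulo boundary classes.
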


Here $\delta = \sum_{j=0}^{[g/2]}\delta_j$ is the pullback of the class of the Deligne-Mumford boundary of $\cM$, see Section~\ref{subsec:generators_of_Pic} for details. We generalize this result by expressing the class of each of the three components of the full discriminant divisor via the set of generators of $\Pic(P\cMhit)\otimes\mathbb Q$:

\begin{thma}
  Let $n\geq 3$ and $g\geq 1$. The following formulas hold in $\Pic(P\cMhit)\otimes \mathbb Q$:
  \begin{align*}
    & [P\cnod] = n(n-1)\Bigl( (n+1)(12\lambda-\delta) - 2(g-1)(2n+1)\phi \Bigr)\\
    & [P\cMax] = \frac{n(n-1)(n-2)(n-3)}{2}\Bigl( 12\lambda-\delta + 4(g-1)\phi \Bigr)\\
    & [P\ccau] = n(n-1)(n-2)\Bigl( 12\lambda-\delta - 4(g-1)\phi \Bigr).
  \end{align*}
  \label{thma:formulas_for_three_divisors}
\end{thma}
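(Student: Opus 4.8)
The plan is to compute the caustic and boundary classes directly, by intersection theory on the universal spectral cover, and to recover the Maxwell class from the linear relation $[P\cDW]=[P\cnod]+2[P\cMax]+3[P\ccau]$ of Theorem~\ref{thma:Korotkin_Zograf_theorem}. What makes this non-circular is that the caustic and the boundary admit \emph{intrinsic} descriptions not referring to the other strata. In the monodromy of $\widehat\Sigma\to\Sigma$ a double zero of $W$ is a collision of two transpositions, and the three cases are separated by sheet overlap: equal transpositions pinch a vanishing cycle and render $\widehat\Sigma$ nodal (boundary), adjacent transpositions $(ij)(jk)$ fuse three sheets into an order-three ramification $v^{3}\sim x$ while $\widehat\Sigma$ stays smooth (caustic), and disjoint transpositions $(ij)(kl)$ leave two separate simple ramifications over one base point (Maxwell). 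Hence the caustic is exactly the order-three ramification locus and the boundary is exactly the locus of singular spectral curves. I work on the projective bundle $\mathbb P=\mathbb P(K_{\mathcal C/\cPm}\oplus\O)$ over the universal curve $\pi\colon\mathcal C\to\cPm$, writing $p\colon\mathbb P\to\mathcal C$, $\hpi=\pi\circ p$, $\zeta=c_1(\O_{\mathbb P}(1))$ and $\psi=c_1(K_{\mathcal C/\cPm})$; the section $P$ cuts out $\widehat{\mathcal C}=\{P=0\}$ as a divisor of class $c_1(\mathcal L)=n\zeta+\cdots$. The rescaling $\xi\cdot P(v,x)=\xi^{n}P(\xi^{-1}v,x)$ makes $v$ a weight-one coordinate, so $P$, $P_v$, $P_{vv}$ are twisted tautological sections whose classes combine $\zeta$ with the generator $\phi$ of the weighted-projective fibration, and it is these weights that produce the $\phi$-terms.

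For the caustic, the order-three ramification locus is the complete intersection $\{P=0\}\cap\{P_v=0\}\cap\{P_{vv}=0\}$ in $\mathbb P$, with $[P_v=0]=(n-1)\zeta+\cdots$ and $[P_{vv}=0]=(n-2)\zeta+\cdots$. Over a generic caustic cover this scheme is a single reduced point, so the projection is birational onto $P\ccau$ and $[P\ccau]=\hpi_*\big([P=0]\cdot[P_v=0]\cdot[P_{vv}=0]\big)$. Expanding the product, reducing powers of $\zeta$ by the Grothendieck relation of the $\mathbb P^{1}$-bundle, and pushing down first to $\mathcal C$ and then to $\cPm$ by Mumford's identities $\pi_*\psi=2g-2$ and $\pi_*\psi^{2}=12\lambda-\delta$, expresses $[P\ccau]$ through $12\lambda-\delta$ and $(g-1)\phi$; carrying the weight-one twist of $v$ fixes the prefactor $n(n-1)(n-2)$.

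For the boundary, a caustic keeps $\widehat\Sigma$ smooth, so the only singular fibres of $\widehat{\mathcal C}\to\cPm$ are nodal and $P\cnod$ is precisely the singular-fibre locus of this family. Its class comes from the relative $1$-jet of the defining section: the fibrewise singular points are the zeros of $j^{1}P\in H^{0}\big(\mathbb P,\,\mathcal P^{1}_{\mathbb P/\cPm}(\mathcal L)\big)$, a section of a rank-three bundle, so that $[P\cnod]=\hpi_*\,c_3\big(\mathcal P^{1}_{\mathbb P/\cPm}(\mathcal L)\big)=\hpi_*\big(c_1(\mathcal L)\,c_2(\Omega^{1}_{\mathbb P/\cPm}\otimes\mathcal L)\big)$, via the principal-parts sequence $0\to\Omega^{1}_{\mathbb P/\cPm}\otimes\mathcal L\to\mathcal P^{1}(\mathcal L)\to\mathcal L\to0$. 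Splitting $\Omega^{1}_{\mathbb P/\cPm}$ into the fibre direction (with $c_1=-2\zeta+p^{*}\psi$) and the base direction (with class $\psi$), forming $c_2$, multiplying by $c_1(\mathcal L)$, and applying the same Grothendieck and Mumford reductions yields $n(n-1)\big((n+1)(12\lambda-\delta)-2(g-1)(2n+1)\phi\big)$. Once $\widehat\kappa=\hpi_*c_1(\omega_{\widehat{\mathcal C}/\cPm})^{2}$ is found in the same manner, Mumford's relation $12\hlambda=\widehat\kappa+[P\cnod]$ converts this boundary class into the Hodge comparison carried out in the second part of the paper.

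The Maxwell class is then forced: $[P\cMax]=\tfrac12\big([P\cDW]-[P\cnod]-3[P\ccau]\big)$, which reproduces the stated prefactor $\tfrac12 n(n-1)(n-2)(n-3)(\cdots)$ and simultaneously checks the two direct computations; it vanishes at $n=2,3$, consistently with the absence of disjoint transpositions there and with the hypothesis $n\ge3$. The step I expect to be the real obstacle is the uniform and correctly-signed bookkeeping of the weight-one twist of $v$ across the caustic and node computations --- this is what pins down the $\phi$-coefficients --- together with the jet-bundle Chern-class calculation for the boundary, where the fibre and base pieces of $\Omega^{1}_{\mathbb P/\cPm}$ and the twist by $\mathcal L$ must be combined without error. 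As an independent check on $[P\cMax]$ one may compute it directly as the pushforward of the honest double-point locus of the ramification curve $R=\{P=P_v=0\}$ over $\mathcal C$, after deleting the diagonal (a Riemann--Hurwitz contribution) and subtracting the caustic excess, where $R$ fails to be reduced.
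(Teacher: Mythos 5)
Your architecture is genuinely different from the paper's, and only partly sound. The paper computes all three classes uniformly on the ramification divisor $\hB\subset\hC$: it shifts the polynomial so the ramification point sits at $t=0$, invokes the factorization $\Disc(P)=-q_nq_{n-2}^3\Disc(P_{n-2})+O(z^2)$ of Lemma~\ref{lemma:decomposition_of_Disc}, reads off each stratum as the divisor of an explicit line-bundle homomorphism on $\hB$ ($q_n$ vanishing to second order for the boundary, $\Disc(P_{n-2})$ for the Maxwell stratum, $q_{n-2}$ for the caustic), and pushes forward with Lemma~\ref{lemma:psi_1_cdot_B}. Your caustic computation is essentially this calculation repackaged ($P_{vv}|_{\hB}$ is, up to a constant, the paper's $q_{n-2}$), and is fine modulo the transversality you flag. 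Your Maxwell step is legitimate but not self-contained: it imports the tau-function formula for $[P\cDW]$ from~\cite{KorotkinZograf}, which the paper deliberately avoids (it computes $[P\cMax]$ directly from $\Disc(P_{n-2})$ and thereby re-proves that formula). Note that carrying out your consistency check honestly gives $[P\cMax]=\tfrac12 n(n-1)(n-2)(n-3)\bigl(12\lambda-\delta-4(g-1)\phi\bigr)$, with a \emph{minus} sign on the $\phi$-term; this is also what the paper's own relation~\eqref{eq:PhMax_equiv} yields and the only sign compatible with Theorem~\ref{thma:Korotkin_Zograf_theorem}, so the $+$ in the printed statement is a typo your check would have exposed had you completed it.

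The genuine gap is in the boundary computation. Your premise that $P\cnod$ ``is precisely the singular-fibre locus'' of $\hC\to\cPm$ fails over the Deligne--Mumford boundary: when $\Sigma$ itself is nodal, $\hSigma$ acquires nodes over the nodes of $\Sigma$, so the full locus of fibrewise singular points is $\Vn$ with $\hpi_*\Vn=n\delta+[P\cnod]$ --- exactly equation~\eqref{eq:pushforward_of_Vn}, which the paper needs later. Hence $\hpi_*c_3\bigl(\mathcal P^1_{\mathbb P/\cPm}(\mathcal L)\bigr)$ does not compute $[P\cnod]$: it picks up the boundary contribution, and the degeneracy-class formula is not even directly applicable there because $\Omega^1_{\mathbb P/\cPm}$ fails to be locally free along the nodes of the fibres of $\cC\to\cPm$, producing further correction terms supported on $\delta$. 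Since the sought $\delta$-coefficient $-n(n-1)(n+1)$ is precisely the kind of quantity these corrections alter, your computation as described cannot give the stated answer without a separate local analysis at the boundary. The paper sidesteps all of this by working on the smooth divisor $\hB$ and characterizing $P\hnod$ as the vanishing locus of the single section $\Phi=dF(v_0,x_0)\,dz^n$ (equivalently $q_n=O(z^2)$), so that the only $\delta$-dependence enters through Mumford's formula $\pi_*\psi^2=12\lambda-\delta$. You would need either to redo the $c_3$ computation over $\M$ and control the closure, or to replace it by a one-section argument on $\hB$ as in the paper.
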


The second objective of the paper is to relate two Hodge classes on $P\cMhit$. Note that since the degree of the cover $\hSigma\to\Sigma$ is equal to $n$ and the degree of the branching divisor is $\deg\div(W) = 2n(n-1)(g-1)$, the genus of $\hSigma$ is $\hg=g(\hSigma) = n^2(g-1)+1$. Thus we have two morphisms $P\cMhit\to \cM$ and $P\cMhit\to \chM$, where the first morphism maps $(\Sigma, [P])$ to the moduli of $\Sigma$ and the second one maps $(\Sigma, [P])$ to the moduli of $\hSigma$. Hence we can define two Hodge classes: the class $\lambda$ is the pullback of the Hodge class from $\cM$ and the class $\hlambda$ is the pullback of the Hodge class from $\chM$. The next theorem provides a formula in $\Pic(P\cMhit)\otimes \mathbb Q$ which relates $\lambda$ and $\hlambda$:

\begin{thma}
  Let $n\geq 3$ and $g\geq 1$. The following formula holds in $\Pic(P\cMhit)\otimes \mathbb Q$:
  \begin{equation*}
    \hlambda = n(2n^2-1)\lambda - \frac{n(n-1)(4n+1)(g-1)}{6}\phi - \frac{n(n^2-1)}{6}\delta.
  \end{equation*}
  \label{thma:hlambda_formula}
\end{thma}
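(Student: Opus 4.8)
\emph{Proof proposal.} The plan is to compute $\hlambda$ directly as the first Chern class of the Hodge bundle of the universal family of spectral covers, and to push everything down to the base curve by means of the spectral (Beauville--Narasimhan--Ramanan) decomposition of the direct image sheaf. Write $\pi\colon \CMcal{C}\to P\cMhit$ for the universal curve (pulled back from $\cM$) with relative dualizing sheaf $\omega$, and $\hat\pi\colon \widehat{\CMcal{C}}\to P\cMhit$ for the universal spectral cover, so that $\hat\pi=\pi\circ\nu$ where $\nu\colon\widehat{\CMcal{C}}\to\CMcal{C}$ is the degree-$n$ covering map that on each fibre is $\hSigma\to\Sigma$. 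Since $\widehat{\CMcal{C}}\to P\cMhit$ is a family of nodal curves of genus $\hg$ whose stable model is classified by the morphism $P\cMhit\to\chM$, the class $\hlambda$ is $c_1$ of $\hat\pi_*\omega_{\widehat{\CMcal{C}}/P\cMhit}$, the Hodge bundle of this family.

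First I would decompose this Hodge bundle over the base curve. Because $P$ is monic in the fibre coordinate $v$, the map $\nu$ is finite and flat of degree $n$ over all of $P\cMhit$ (including the Deligne--Mumford boundary and the whole discriminant locus), and $\widehat{\CMcal{C}}$ is a local complete intersection in the total space of $\omega\otimes\CMcal{L}$, where $\CMcal{L}$ is the tautological line bundle of the weighted projectivization. Hence the spectral decomposition $\nu_*\CMcal{O}_{\widehat{\CMcal{C}}}=\bigoplus_{i=0}^{n-1}(\omega\otimes\CMcal{L})^{-i}$ holds globally. Grothendieck duality for the finite Gorenstein map $\nu$ gives $\nu_*\omega_\nu=(\nu_*\CMcal{O}_{\widehat{\CMcal{C}}})^\vee$, and twisting by $\nu^*\omega$ (using $\omega_{\widehat{\CMcal{C}}/P\cMhit}=\omega_\nu\otimes\nu^*\omega$) yields $\nu_*\omega_{\widehat{\CMcal{C}}/P\cMhit}=\bigoplus_{i=1}^{n}\omega^{\otimes i}\otimes\CMcal{L}^{\otimes(i-1)}$. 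Since $\nu$ is finite and $\CMcal{L}$ is pulled back from the base, $\hat\pi_*=\pi_*\nu_*$ and the projection formula give the isomorphism of vector bundles
\[
\hat\pi_*\omega_{\widehat{\CMcal{C}}/P\cMhit}\;\cong\;\bigoplus_{i=1}^{n}\mathbb{E}_i\otimes\CMcal{L}^{\otimes(i-1)},\qquad \mathbb{E}_i:=\pi_*\omega^{\otimes i},
\]
where $\mathbb{E}_1$ is the ordinary Hodge bundle and $\mathbb{E}_i$ for $i\ge2$ has rank $(2i-1)(g-1)$. Taking $c_1$ then splits $\hlambda$ into a ``base'' part $\sum_{i=1}^n c_1(\mathbb{E}_i)$ and a ``twist'' part $\bigl(\sum_{i=1}^n (i-1)\,\mathrm{rk}\,\mathbb{E}_i\bigr)\,c_1(\CMcal{L})$.

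For the base part I would invoke Mumford's Grothendieck--Riemann--Roch computation for the pluricanonical bundles, in the form $c_1(\mathbb{E}_i)=(6i^2-6i+1)\lambda-\binom{i}{2}\delta$ (for $i=1$ this is $c_1(\mathbb{E})=\lambda$), pulled back along $P\cMhit\to\cM$. Summing over $i$ and using $\sum_{i=1}^n(6i^2-6i+1)=n(2n^2-1)$ together with $\tfrac12\sum_{i=1}^n i(i-1)=\tfrac{n(n^2-1)}{6}$ reproduces exactly the coefficients of $\lambda$ and $\delta$ in the statement. For the twist part a direct summation gives $\sum_{i=1}^n(i-1)(2i-1)(g-1)=\tfrac{n(n-1)(4n+1)}{6}(g-1)$, and identifying $c_1(\CMcal{L})=-\phi$ with the standard generator of Section~\ref{subsec:generators_of_Pic} turns this into the asserted coefficient $-\tfrac{n(n-1)(4n+1)(g-1)}{6}\phi$. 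Assembling the three pieces yields the formula.

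The hard part will be the global validity of the spectral decomposition and of the identification $\hlambda=c_1(\hat\pi_*\omega_{\widehat{\CMcal{C}}/P\cMhit})$ over the non-generic locus. One must check that $\nu$ stays finite flat and that the fibres $\hSigma$ remain at worst nodal over every boundary and discriminant point; this is where the trichotomy boundary/Maxwell/caustic and the hypotheses $n\ge3$, $g\ge1$ enter, since only over the ``boundary'' component does $\hSigma$ itself degenerate (to a nodal curve), while over the Maxwell and caustic strata $\hSigma$ stays smooth. One must also verify that passing to the stable model of $\hSigma$ does not alter $\hat\pi_*\omega_{\widehat{\CMcal{C}}/P\cMhit}$, so that its determinant genuinely computes $\hlambda$. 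The second delicate point is fixing the exact power of $\CMcal{L}$ in the decomposition and the sign in $c_1(\CMcal{L})=-\phi$: this is a bookkeeping of the $\mathbb{G}_m$-weights of the weighted projectivization against the normalization of $\phi$, to be pinned down by comparison with Section~\ref{subsec:generators_of_Pic}.
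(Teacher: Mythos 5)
Your argument is correct in substance for $g\geq 2$, and it takes a genuinely different route from the paper. The paper never decomposes the Hodge bundle of the spectral family: it applies the degree-one Grothendieck--Riemann--Roch formula for $\O_{\hC}$ in the form $12\hlambda = \hpi_*(c_1(\omega_{\hpi})^2 + [\Vn])$, computes $\hpi_*c_1(\omega_{\hpi})^2$ from $c_1(\omega_{\hpi}) = p^*\psi + \hB$ together with the self-intersection relation $2\,\hB\cdot\hB = -p^*\psi\cdot\hB + P\hnod + P\hcau$ (obtained by comparing adjunction on $\hB\subset\hC$ with the ramification of $\hB\to P\cMhit$), and then substitutes the expressions for $[P\cnod]$ and $[P\ccau]$ from Theorem~\ref{thma:formulas_for_three_divisors}. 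Your route --- the spectral (BNR) decomposition of the pushforward of $\omega_{\hpi}$ under the covering map $p:\hC\to\cC$, followed by Mumford's formula for $c_1(\pi_*\omega_{\pi}^{\otimes i})$ --- bypasses the discriminant analysis entirely and is logically independent of Theorem~\ref{thma:formulas_for_three_divisors}; since your sums $\sum_{i=1}^n(6i^2-6i+1)=n(2n^2-1)$, $\sum_{i=1}^n\binom{i}{2}=n(n^2-1)/6$ and $\sum_{i=1}^n(i-1)(2i-1)=n(n-1)(4n+1)/6$ reproduce the stated coefficients exactly, your computation doubles as an independent consistency check on the divisor-class formulas that the paper's proof consumes. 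Three points to nail down. First, the sign bookkeeping you deferred: the paper's convention is $\phi=+c_1(\L)$, and in Lemma~\ref{lemma:psi_1_cdot_B} a weight-$m$ object such as $\Disc(P)$ is a homomorphism out of $\L_{(m)}\simeq\L^{\otimes m}$, i.e.\ a section of $(\cdot)\otimes\L^{\otimes -m}$; since the fibre coordinate $v$ has weight $1$, the universal spectral curve sits in the total space of $\omega_{\pi}\otimes\pi^*\L^{-1}$ and the correct decomposition is $p_*\omega_{\hpi}\cong\bigoplus_{i=1}^{n}\omega_{\pi}^{\otimes i}\otimes\pi^*\L^{\otimes -(i-1)}$. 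Your two sign choices (twist by $\L^{\otimes(i-1)}$ and $c_1(\L)=-\phi$) are each opposite to this but cancel, so the final coefficient of $\phi$ is unaffected. Second, the degenerate fibres: non-reduced or worse-than-nodal spectral curves occur only in codimension at least two, so the decomposition and the identification $\hlambda=c_1(\hpi_*\omega_{\hpi})$ (stabilization of a nodal fibre does not change $R^0\hpi_*\omega_{\hpi}$) need only be verified away from such a locus --- the same caveat that the paper's application of the nodal-curve Grothendieck--Riemann--Roch formula requires. Third, the case $g=1$ genuinely breaks your argument as written: there $\pi_*\omega_{\pi}^{\otimes i}$ has rank $1$ rather than $(2i-1)(g-1)=0$, the sheaves $R^1\pi_*\omega_{\pi}^{\otimes i}$ do not vanish, and Mumford's formula must be replaced by its Grothendieck--Riemann--Roch form for $\pi_{!}$; this case needs a separate short treatment.
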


Note that all coefficients of the right-hand side in the formula in Theorem~\ref{thma:hlambda_formula} are integers.

The paper is organized as follows. In Section~\ref{sec:polynomials_variety} we recall some basic facts about the geometry of the space of monic polynomials with multiple roots and derive a technical lemma that will be used for a local analysis of the discriminant $W$ with multiple zeros. In Section~\ref{sec:space_of_covers} we recall the construction of $P\cMhit$ and some of its basic properties, and introduce some related notation. In Section~\ref{sec:discriminant_locus} we prove Theorem~\ref{thma:formulas_for_three_divisors} and in Section~\ref{sec:hlambda} we prove Theorem~\ref{thma:hlambda_formula}.

\subsection*{Acknowledgments.} I would like to thank Dmitry Korotkin and Peter Zograf for helpful discussions. The research was supported by the grant of the Government of the Russian Federation for the state support of scientific research carried out under the supervision of leading scientists, agreement 14.W03.31.0030 dated 15.02.2018.

\section{Variety of monic polynomials}
\label{sec:polynomials_variety}

Recall that a polynomial of the form $P(t) = t^n + q_1t^{n-1}+\dots+q_n$ is called \emph{monic}. In this section we assume that $(q_1,\dots,q_n)\in \mathbb C^n$. The discriminant $\Disc(P)$ is a polynomial of $q_1,\dots,q_n$ and the equation $\Disc(P) = 0$ defines the affine variety $\D\subset \mathbb C^n$ of monic polynomials with multiple roots (see~\cite{lando2010graphs}). The variety $\D$ is not smooth: it has a normal crossing along the subvariety $\Dmax\subset \D$ that parametrizes polynomials with two multiple roots, and it has a cusp along the subvariety $\Dcau\subset \D$ corresponding to polynomials that have a root of order $3$ or bigger. To construct the normalization of $\D$ let us consider the variety $\hD\subset \mathbb C^n\times \mathbb C$ given by
\begin{equation}
  \hD = \{(q_1,\dots, q_n, t)\in  \mathbb C^n\times \mathbb C \mid\ P(t) = 0,\ P'(t) = 0\},
  \label{eq:def_of_hD}
\end{equation}
where $P(t) = t^n + q_1t^{n-1}+\dots+q_n$. The forgetful projection $ \mathbb C^n\times \mathbb C\to \mathbb C^n$ maps $\hD$ onto $\D$, and the induced mapping $\hD\to \D$ has the degree one over an open subset of $\D$. To see that $\hD$ is smooth observe that the functions $t, P(t),P'(t),\dots,P^{(n-1)}(t)$ form another coordinate system on $\mathbb C^n\times \mathbb C$. In this coordinates $\hD$ is just a linear subspace given by two linear equations $P(t) = 0,\ P'(t)=0$.

The group $\mathbb C^*$ acts on the space $\mathbb C^n$ of monic polynomials by the rule $(\xi\cdot P)(t) = \xi^n P(\xi^{-1}t)$. In terms of the coefficients $q_1,\dots, q_n$ this action can be rewritten as
\begin{equation}
  \xi\cdot (q_1,q_2, \dots, q_n) = (\xi q_1, \xi^2 q_2,\dots, \xi^n q_n).
  \label{eq:action_of_Cstar_on_q}
\end{equation}
Denote by $P\mathbb C^n$ the projectivization under this action. The weighted projective space $P\mathbb C^n$ is a smooth orbifold. The variety $\D$ is equivariant under the action of $\mathbb C^*$ and we denote its projectivization by $P\D$. The action of $\mathbb C^*$ on $\mathbb C^n$ lifts to the action of $\mathbb C^*$ on $\mathbb C^n\times \mathbb C$ given by $\xi\cdot(P,t) = (\xi\cdot P, \xi t)$. The variety $\hD$ is equivariant under this action and we denote the projectivization by $P\hD$. Note that the map $\hD\to \D$ induces the map $P\hD\to P\D$ (although we do not have a map from $P(\mathbb C^n\times \mathbb C)$ to $P\mathbb C^n$) of projectivized varieties.

\begin{lemma}
  Let $n\geq 3$. Given a monic polynomial $P(t) = t^n + q_1t^{n-1} + \dots + q_n$ we denote by $P_{n-2}(t) = t^{n-2} + q_1 t^{n-3} + \dots + q_{n-2}$ the polynomial $t^{-2}(P(t) - (q_{n-1}t - q_n))$. There exist polynomials $R_0,R_1,S\in \mathbb C[q_1,\dots, q_n]$ such that the equation
  \begin{equation*}
    \Disc(P) = - q_nq_{n-2}^3\Disc(P_{n-2}) + q_n(q_{n-1}R_1 + q_n R_0) + q_{n-1}^2 S
  \end{equation*}
  holds for any choice of the polynomial $P(t) = t^n + q_1t^{n-1} + \dots + q_n$.
  \label{lemma:decomposition_of_Disc}
\end{lemma}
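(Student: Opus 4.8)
The plan is to exploit the elementary identity $P(t)=t^2P_{n-2}(t)+q_{n-1}t+q_n$, which is just the definition of $P_{n-2}$ rewritten, and to notice that \emph{every} term on the right-hand side of the asserted formula lies in the ideal $I=(q_n,q_{n-1}^2)\subset\mathbb C[q_1,\dots,q_n]$ (here $n\geq 3$ guarantees that $q_{n-2},q_{n-1},q_n$ are genuinely distinct coefficients and that $P_{n-2}$ has positive degree). Accordingly the proof splits into two parts: first show $\Disc(P)\in I$, so that one may write $\Disc(P)=q_nC+q_{n-1}^2S$ for some $C,S\in\mathbb C[q_1,\dots,q_n]$; and then identify the residue of $C$ modulo $(q_{n-1},q_n)$ as a constant multiple of $q_{n-2}^3\Disc(P_{n-2})$, the constant being fixed by the computation below. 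Substituting $C=c\,q_{n-2}^3\Disc(P_{n-2})+q_{n-1}R_1+q_nR_0$ back into $\Disc(P)=q_nC+q_{n-1}^2S$ then produces exactly the claimed decomposition.

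For the membership, specialize $q_n=0$. Then $P(t)=t\,Q(t)$ with $Q(t)=tP_{n-2}(t)+q_{n-1}$, and the multiplicativity $\Disc(fg)=\Disc(f)\Disc(g)\,\mathrm{Res}(f,g)^2$ together with $\mathrm{Res}(t,Q)=Q(0)=q_{n-1}$ gives $\Disc(P)\big|_{q_n=0}=q_{n-1}^2\Disc(Q)$. Hence $\Disc(P)-q_{n-1}^2\Disc(Q)$ is divisible by $q_n$, which proves $\Disc(P)\in(q_n,q_{n-1}^2)$ and furnishes a decomposition $\Disc(P)=q_nC+q_{n-1}^2S$. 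Since $\mathbb C[q_1,\dots,q_n]$ is a UFD with $\gcd(q_n,q_{n-1}^2)=1$, any two such decompositions have their $C$-terms differing by a multiple of $q_{n-1}^2$; in particular the residue $C\bmod(q_{n-1},q_n)$ is well defined, and differentiating $\Disc(P)=q_nC+q_{n-1}^2S$ shows that it equals $\partial_{q_n}\Disc(P)\big|_{q_{n-1}=q_n=0}$.

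To evaluate this residue I would use $\Disc(P)=(-1)^{n(n-1)/2}\mathrm{Res}(P,P')$ in the root-product form $\mathrm{Res}(P,P')=n^n\prod_{P'(\beta)=0}P(\beta)$, valid because $P'$ has degree $n-1$ and leading coefficient $n$. The crucial feature is that $P'$ does not involve $q_n$, so $\partial_{q_n}P(\beta)=1$ for every root $\beta$ of $P'$, whence $\partial_{q_n}\prod_\beta P(\beta)=\sum_\beta\prod_{\beta'\neq\beta}P(\beta')$. At $q_{n-1}=q_n=0$ one has $P'(t)=t\,R(t)$ with $R(t)=2P_{n-2}(t)+tP_{n-2}'(t)$, so $\beta=0$ is a simple root of $P'$ (since $R(0)=2q_{n-2}$) and $P(0)=0$; consequently every summand except the one indexed by $\beta=0$ carries the factor $P(0)=0$, and the residue collapses to $\prod_{R(\beta)=0}P(\beta)$. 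On this locus $P(\beta)=\beta^2P_{n-2}(\beta)$, so the product factors as $\big(\prod_\beta\beta\big)^2\prod_\beta P_{n-2}(\beta)$: the first factor is read off from the constant term $2q_{n-2}$ of $R$, and the second equals $\mathrm{Res}(R,P_{n-2})/n^{n-2}$, which by the reduction $R(\gamma)=\gamma P_{n-2}'(\gamma)$ at the roots $\gamma$ of $P_{n-2}$ evaluates to an explicit multiple of $q_{n-2}\Disc(P_{n-2})$. Reassembling with the prefactors $n^n$ and $(-1)^{n(n-1)/2}$ yields the residue as the required constant times $q_{n-2}^3\Disc(P_{n-2})$, completing the identification of $C$.

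The main obstacle is the bookkeeping in this last step: tracking the several resultant sign conventions, the double root of $P$ at $t=0$, and above all the precise numerical constant and sign, which I would pin down by testing the identity for small $n$ (for instance $n=3$, where $\Disc(P)=18q_1q_2q_3-4q_1^3q_3+q_1^2q_2^2-4q_2^3-27q_3^2$ lets one read off $C$ and $S$ directly). The structural steps—membership in $(q_n,q_{n-1}^2)$ and well-definedness of the residue of $C$—are routine; all the genuine content lies in computing $\partial_{q_n}\Disc(P)$ at $q_{n-1}=q_n=0$ through the resultant manipulations above.
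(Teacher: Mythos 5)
Your argument is correct in structure and takes a genuinely different route from the paper's. The paper proceeds entirely by elimination: it peels off the unwanted terms of $\Disc(P)$ one at a time by substituting explicit one-parameter test families such as $(t-z)(t-2z)P_{n-2}(t)$, $(t-3z)(t-6z)(t+2z)P_{n-3}(t)$ and $(t^2-z)P_{n-2}(t)$, comparing orders of vanishing in $z$ to kill the residual polynomials, and only at the last step uses multiplicativity of the discriminant to identify the surviving coefficient. You instead obtain the membership $\Disc(P)\in(q_n,q_{n-1}^2)$ in one stroke by specializing $q_n=0$ and using $\Disc(tQ)=q_{n-1}^2\Disc(Q)$, which moreover hands you an explicit $S=\Disc(tP_{n-2}+q_{n-1})$, and you then identify $C\bmod(q_{n-1},q_n)$ directly as $\partial_{q_n}\Disc(P)\big|_{q_{n-1}=q_n=0}$ via the root-product form of the resultant. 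Your route is shorter and makes the divisibility by $q_{n-2}^3$ fall out of a single computation rather than being established in two separate elimination steps; the price is exactly the sign and normalization bookkeeping you anticipate, and the small genericity point (that $t=0$ is a simple root of $P'$ because $R(0)=2q_{n-2}\neq 0$ generically) is one you already address.

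One substantive remark: carried to the end, your computation gives the residue $-4\,q_{n-2}^3\Disc(P_{n-2})$, not $-q_{n-2}^3\Disc(P_{n-2})$. The test case $n=3$ confirms this: $\partial_{q_3}\Disc(P)\big|_{q_2=q_3=0}=-4q_1^3$ while $\Disc(P_1)=1$, so the stated identity with constant $-1$ would force a non-polynomial $R_0$. This is not a defect of your method; the constant in the lemma as printed is off by a factor of $4$, traceable to the paper's final step where $\Disc(t^2-z)$ is taken to be $z$ rather than $4z$. The discrepancy is harmless downstream, since the lemma is only used to locate vanishing loci and compute divisor classes, for which the nonzero normalization of the leading term is irrelevant; but your proof, honestly written up, establishes the identity with first summand $-4q_nq_{n-2}^3\Disc(P_{n-2})$, and you should state it that way.
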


\begin{proof}
  Clearly, $\Disc(P)$ lies in the ideal in $\mathbb C[q_1,\dots,q_n]$ generated by $q_n$ and $q_{n-1}$, so we can find $S_1,S_2\in \mathbb C[q_1,\dots, q_n]$ such that $\Disc(P) = q_n S_1 + q_{n-1}S_2$ where $S_2$ is independent of $q_n$. Let $S_2 = q_{n-1} S + S_3$, where $S_3$ is independent on $q_{n-1}$. We obtain
  \begin{equation}
    \Disc(P) = q_n S_1 + q_{n-1}^2S + q_{n-1}S_3.
    \label{eq:intermediate_decomp_of_Disc_-1}
  \end{equation}
  Consider the polynomial
  \begin{equation*}
    P_z(t) = (t-z)(t-2z)P_{n-2}(t) = t^n + q_1(z) t^{n-1} + \dots + q_n(z)
  \end{equation*}
   where $z$ is a formal variable. Then $q_n(z)$ is divisible by $z^2$ and $q_{n-1}(z)$ is divisible by $z$ but not by $z^2$. We have
  \[\begin{array}{lll}
      \Disc(P_z)&= &q_n(z) S_1(q_1(z),\dots,q_n(z)) + q_{n-1}(z)^2 S(q_1(z),\dots,q_{n-1}(z))+\\
      & &\hspace{10em}+ q_{n-1}(z) S_3(q_1(z),\dots,q_{n-2}(z))\\
      &= &q_{n-1}(z)S_3(q_1,\dots,q_{n-2}) + z^2Q_1 
  \end{array}\]
  due to~\eqref{eq:intermediate_decomp_of_Disc_-1}, where $Q_1\in \mathbb C[z, q_1,\dots,q_n]$ is some polynomial. Since $\Disc(P_z)$ is divisible by $z^2$, we see that $S_3$ must be zero, hence $S_2 = q_{n-1} S$ and 
  \begin{equation}
    \Disc(P) = q_n S_1 + q_{n-1}^2S.
    \label{eq:intermediate_decomp_of_Disc_0}
  \end{equation}

  We can find $R_0,R_1,R_2,R_3\in \mathbb C[q_1,\dots,q_n]$, where the polynomial $R_2$ is independent of $q_n, q_{n-1}$ and $R_3$ is independent of $q_n, q_{n-1}, q_{n-2}$, such that $S_1 = q_n R_0 + q_{n-1} R_1 + q_{n-2} R_2 + R_3$. Equation~\eqref{eq:intermediate_decomp_of_Disc_0} then looks like follows
  \begin{equation}
    \Disc(P) = q_nq_{n-2}R_2(q_1,\dots,q_{n-2}) + q_n(q_{n-1}R_1 + q_n R_0) + q_{n-1}^2S + q_nR_3(q_1,\dots,q_{n-3}).
    \label{eq:intermediate_decomp_of_Disc_1}
  \end{equation}
  Put $P_{n-3}(t) = t^{n-3} + q_1 t^{n-2} + \dots + q_{n-3}$, and consider the polynomial
  \begin{equation*}
    \begin{split}
      & P_z(t) = (t-3z)(t-6z)(t+2z)P_{n-3}(t) =\\
      &= (t^3-7zt^2 +36z^3)P_{n-3}(t) = t^n + q_1(z) t^{n-1} + \dots + q_n(z)
    \end{split} 
  \end{equation*}
   where $z$ is a formal variable. Then $q_n(z)$ and $q_{n-1}(z)$ are divisible by $z^3$, and $q_{n-2}(z)$ is divisible by $z$. Using these observations together with~\eqref{eq:intermediate_decomp_of_Disc_1} we conclude that
  \begin{equation*}
    \Disc(P_z) = q_n(z)R_3(q_1,\dots, q_{n-3}) + z^4Q_2,
  \end{equation*}
  where $Q_2\in \mathbb C[z, q_1,\dots,q_n]$ is some polynomial. Note that the $\Disc(P_z)$ is divisible by $z^4$ (it is even divisible by $z^6$). Since $q_n(z)$ is not divisible by $z^4$, the polynomial $R_3$ should be zero. It follows that we can now rewrite~\eqref{eq:intermediate_decomp_of_Disc_1} as
  \begin{equation}
    \Disc(P) = q_nq_{n-2}R_2(q_1,\dots,q_{n-2}) + q_n(q_{n-1}R_1 + q_n R_0) + q_{n-1}^2S.
    \label{eq:intermediate_decomp_of_Disc_2}
  \end{equation}
  Using this equation and the factorization properties of $q_n(z), q_{n-1}(z), q_{n-2}(z)$ again we conclude that
  \begin{equation*}
    \Disc(P_z) = q_n(z)q_{n-2}(z) R_2(q_1(z), \dots,q_{n-2}(z)) + z^6Q_3,
  \end{equation*}
  where $Q_3\in \mathbb C[z, q_1,\dots,q_n]$ is some polynomial. Since $\Disc(P_z)$ is divisible by $z^6$ while $q_n(z)q_{n-2}(z)$ is divisible only by $z^4$, the polynomial $R_2(q_1(z), \dots,q_{n-2}(z))$ must be divisible by $z^2$. It can be easily shown that this is possible only if $R_2(q_1,\dots, q_{n-2})$ is divisible by $q_{n-2}^2$, i.e. there exists a polynomial $R_4\in \mathbb C[q_1,\dots,q_{n-2}]$ such that
  \begin{equation}
    \Disc(P) = q_nq_{n-2}^3R_4(q_1,\dots,q_{n-2}) + q_n(q_{n-1}R_1 + q_n R_0) + q_{n-1}^2S.
    \label{eq:intermediate_decomp_of_Disc_3}
  \end{equation}

  Now consider the polynomial
  \begin{equation*}
    \begin{split}
      & P_z(t) = (t^2-z) P_{n-2}(t) =\\
      & = t^n + q_1 t^{n-1} + (q_2 - z)t^{n-2} + (q_3-zq_1)t^{n-3} + \dots + (q_{n-2} - zq_{n-4})t^2 -zq_{n-3}t - zq_{n-2}
    \end{split}
  \end{equation*}
  It follows from~\eqref{eq:intermediate_decomp_of_Disc_3} that
  \begin{equation}
    \Disc(P_z) = -zq_{n-2}^4 R_2(q_1,\dots, q_{n-2}) + z^2Q_4,
    \label{eq:asymptotics_of_Pz_Disc}
  \end{equation}
  where $Q_4\in \mathbb C[z, q_1,\dots,q_n]$ is some polynomial. On the other hand we have
  \[
    \Disc(P_z) = z\cdot\Disc(P_{n-2})\cdot \mathrm{Res}(t^2-z, P_{n-2}(t))^2 = zq_2^4\cdot\Disc(P_{n-2}) + z^2Q_5,
  \]
  where $Q_5\in \mathbb C[z, q_1,\dots,q_n]$ is some polynomial. Comparing this equation with~\eqref{eq:asymptotics_of_Pz_Disc} we find that $R_2 = -\Disc(P_{n-2})$. Substituting this equality into~\eqref{eq:intermediate_decomp_of_Disc_3} we get the statement of the lemma.
  \end{proof}

\section{Space of covers}
\label{sec:space_of_covers}

Let $\Sigma$ be a smooth projective curve of genus $g$. Denote by $\MSigma$ the moduli space of $\mathrm{GL}(n,\mathbb C)$ spectral covers of $\Sigma$:
\begin{equation}
  \MSigma = \bigoplus_{j = 1}^n H^0(\Sigma, K_{\Sigma}^{\otimes j})
  \label{eq:def_of_Mhit_Sigma}
\end{equation}
where $K_{\Sigma}$ is the canonical class of $\Sigma$ and
\begin{equation}
  \dim\MSigma = n^2(g-1)+1.
  \label{eq:def_of_hg}
\end{equation}
A point $(q_1,\dots,q_n)\in \MSigma$ can be considered as a polynomial $P(t,x) = t^n + q_1(x)t^{n-1} + \dots + q_n(x)$. For each $x\in \Sigma$ and $v\in T_x^*\Sigma$ the value $P(v,x)$ is an element of $(T^*_x\Sigma)^{\otimes n}$. The spectral cover $\hSigma$ associated with $P$ is a subvariety in $T^*\Sigma$ defined by
\begin{equation*}
  \hSigma = \{(x, v)\in \Sigma\times T_x^*\Sigma \mid\ P(v,x) = 0\};
\end{equation*}
clearly, $\hSigma$ is a projective curve. Generically $\hSigma$ is smooth and all the ramification points of the projection $\hSigma\to\Sigma$ are simple. If $\hSigma$ is smooth, then by the Riemann-Hurwitz formula the genus of $\hSigma$ is equal to $\hg = n^2(g-1)+1$. Define the action of $\mathbb C^*$ on $\MSigma$ by 
\begin{equation}
  (\xi\cdot P)(t,x) = \xi^n P(\xi^{-1}t, x).
  \label{eq:def_of_action_of_Cstar}
\end{equation}
Denote by $P\MSigma$ the corresponding projectivization.

Let $\cM$ be the Deligne-Mumford compactification of the moduli space of genus $g$ curves and let $\nu: \cMpt\to \cM$ be the universal curve. We define the moduli space of $\mathrm{GL}(n,\mathbb C)$ Hitchin's spectral covers by
\begin{equation}
  \cMhit = \bigoplus_{j = 1}^n R^0\nu_*\omega_{\nu}^{\otimes j},
  \label{eq:def_of_cMhit}
\end{equation}
where $\omega_{\nu}$ is the relative dualizing sheaf. The forgetful projection $\cMhit\to \cM$ is a bundle with fiber over $\Sigma$ isomorphic to $\MSigma$ (in the case when $\Sigma$ is not smooth one have to replace $K_{\Sigma}$ with the relative dualizing sheaf on $\Sigma$). The action of $\mathbb C^*$ on $\MSigma$ defined by~\eqref{eq:def_of_action_of_Cstar} extends to the action on $\cMhit$. Let $P\cMhit$ denote the corresponding projectivization. The $P\cMhit$ is a smooth orbifold (or a Deligne-Mumford stack). Denote by $\L\to P\cMhit$ the tautological line bundle associated with the projectivization.

Let $\pi:\cC\to P\cMhit$ be the pullback of the universal curve $\cMpt\to\cM$. Denote by $\omega_{\pi}$ the relative dualizing sheaf and set 
\begin{equation}
  \psi = c_1(\omega_{\pi}).
  \label{eq:def_of_psi_1}
\end{equation}
Let $\hpi: \hC\to P\cMhit$ be the universal family of Hitchin's spectral curves, so that the fiber of $\hpi$ over a point $(\Sigma, [P])\in P\cMhit$ is isomorphic to the curve $\hSigma$ associated with $P$. A point in $\hC$ can be represented by a quadruple $(\Sigma,x, P, v)$, where $P\in \MSigma$, $x\in \Sigma$, $v\in T_x^*\Sigma$ and $P(v,x) = 0$. It is straightforward to check that the map $p:\hC\to \cC$ that forgets $v$ is a branched cover that coincides with $\hSigma\to \Sigma$ fiber-wise over $P\cMhit$. Let us denote by $\hB\subset \hC$ the ramification divisor of $p$ and by $\B = p(\hB)\subset \cC$ the branching divisor. Consider the projection $\hB\to \cMpt$ that maps $(\Sigma, x, P, v)$ to $(\Sigma, x)$. Let $(\Sigma,x)$ be curve with a marked point $x$ and assume for simplicity that $\Sigma$ is smooth (otherwise one has to consider the normalization of $\Sigma$). Let $(\MSigma)_x\subset \MSigma$ be the subvariety consisting of $(q_1,\dots,q_n)$ such that $q_j(x) = 0$ for each $j$. The fiber of $\hB\to \cMpt$ is (not canonically) isomorphic to $P(\hD\times (\MSigma)_x)$. Similarly, the fiber of the projection $\B\to \cMpt$ is isomorphic to $P(\D\times (\MSigma)_x)$ (see Section~\ref{sec:polynomials_variety}, where we define $\D$ and $\hD$). Notice that
\begin{equation}
  (\MSigma)_x \simeq \mathbb C^{n^2(g-1)-n+2}
  \label{eq:Mhit_x_Sigma_isomorphic_to_C_ghat-n}
\end{equation}
(cf.~\eqref{eq:def_of_hg}). Define the action of $\mathbb C^*$ on $\mathbb C^{n^2(g-1)-n+2}$ via this isomorphism. The following lemma is straightforward:
\begin{lemma}
  (1) The projection $\hB\to \cMpt$ is a bundle with the fiber $P(\hD\times \mathbb C^{n^2(g-1)-n+2})$ (i.e. it can be locally represented as a projection of the form $P(\hD\times \mathbb C^{n^2(g-1)-n+2})\times X\to X$). In particular, the variety $\hB$ is smooth.
  
  (2) The projection $\B\to \cMpt$ is a bundle with fiber $P(\D\times \mathbb C^{n^2(g-1)-n+2})$. In particular, the singularities of $\B$ are normal crossings and cusps.
  
  (3) The map $\hB\to \B$ is a bundle morphism that is given fiber-wise by the map $\hD\to \D$.
  
  (4) The ramification of the map $p: \hC\to \cC$ is simple at a generic point.
  \label{lemma:hB_is_smooth}
\end{lemma}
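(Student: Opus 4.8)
The plan is to prove all four assertions by a single local computation over $\cMpt$, reducing each statement to the geometry of $\hD$ and $\D$ studied in Section~\ref{sec:polynomials_variety}. The essential observation is that the two equations cutting out $\hB$ inside $\hC$, namely $P(v,x)=0$ and $\partial_v P(v,x)=0$, are conditions imposed at the single point $x$, and hence depend on $P$ only through the values $q_1(x),\dots,q_n(x)$. Everything then follows by spreading out this fiberwise picture.

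First I would fix a point $(\Sigma,x)\in\cMpt$, assume $\Sigma$ smooth for clarity, and choose a local trivialization of the (relative) dualizing sheaf near $x$. This provides a linear coordinate $t$ on $T_x^*\Sigma$ together with identifications $K_{\Sigma,x}^{\otimes j}\cong\mathbb C$, so that the evaluation morphism $\mathrm{ev}_x\colon\MSigma\to\bigoplus_{j=1}^nK_{\Sigma,x}^{\otimes j}\cong\mathbb C^n$, $P\mapsto(q_1(x),\dots,q_n(x))$, is defined. Using $g\geq 1$ and the base-point-freeness of $K_\Sigma^{\otimes j}$, the map $\mathrm{ev}_x$ is surjective, its kernel is precisely $(\MSigma)_x$, and a choice of algebraic splitting gives $\MSigma\cong\mathbb C^n\oplus(\MSigma)_x$. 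Writing $v=t$, the two defining equations of $\hB$ become $t^n+q_1(x)t^{n-1}+\dots+q_n(x)=0$ together with its $t$-derivative, i.e. they say exactly that $(q_1(x),\dots,q_n(x),t)\in\hD$ in the notation of~\eqref{eq:def_of_hD}. Since these depend on $P$ only through $\mathrm{ev}_x(P)$ and not on the $(\MSigma)_x$ factor, the fiber of $\hB\to\cMpt$ over $(\Sigma,x)$ is, before projectivizing, the product $\hD\times(\MSigma)_x$, and after the diagonal $\mathbb C^*$-quotient it is $P(\hD\times(\MSigma)_x)$; invoking~\eqref{eq:Mhit_x_Sigma_isomorphic_to_C_ghat-n} proves (1). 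The same argument with the single equation $\Disc(P(\cdot,x))=0$ in place of the pair yields the fiber $P(\D\times(\MSigma)_x)$ of $\B\to\cMpt$, giving (2); and because the splitting is compatible with the forgetful map $\hD\to\D$, the induced morphism $\hB\to\B$ is fiberwise $(\hD\to\D)\times\mathrm{id}$, which is (3).

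To upgrade these fiberwise identifications to genuine bundle statements I would let $(\Sigma,x)$ vary: over a small (étale) neighborhood $X\subset\cMpt$ one can choose the trivialization of the dualizing sheaf and the splitting of $\mathrm{ev}_x$ algebraically and simultaneously, producing local product structures $P(\hD\times\mathbb C^{n^2(g-1)-n+2})\times X$ and $P(\D\times\mathbb C^{n^2(g-1)-n+2})\times X$ compatible with $\hB\to\B$. Smoothness of $\hB$ then follows since $\hD$ is smooth (Section~\ref{sec:polynomials_variety}) and the remaining factors are smooth; likewise the singularities of $\B$ are exactly those of $\D$ propagated by a smooth product and a free projectivization, that is, normal crossings along the locus lying over $\Dmax$ and cusps along the locus over $\Dcau$. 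For (4), over the generic point of $\B$, equivalently the generic point of $\D$ where $P$ has a single double root and all remaining roots simple, the forgetful map $\hD\to\D$ is birational (Section~\ref{sec:polynomials_variety}) and the cover $\hSigma\to\Sigma$ has the local model $t\mapsto t^2$, so the ramification of $p$ is simple.

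The main obstacle is not any individual computation but the careful set-up of local triviality over all of $\cMpt$: one must verify that $\mathrm{ev}_x$ remains surjective, so that the splitting persists, as $(\Sigma,x)$ moves, and in particular treat the boundary points where $\Sigma$ is nodal by passing to the normalization and replacing $K_\Sigma$ by the relative dualizing sheaf, together with the orbifold/stack subtleties of $\cMpt$. Once local triviality is established, the transfer of smoothness, of normal-crossing, and of cusp behavior from $\hD$ and $\D$ to $\hB$ and $\B$ is formal, which is why the statement is justifiably called straightforward.
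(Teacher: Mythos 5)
The paper states this lemma without proof (it is simply declared ``straightforward''), and the argument it implicitly relies on is exactly the fiberwise identification with $P(\hD\times (\MSigma)_x)$ and $P(\D\times (\MSigma)_x)$ set up in the paragraph preceding the lemma; your proposal fills in precisely that argument (surjectivity of evaluation at $x$, splitting off $(\MSigma)_x$, transporting smoothness of $\hD$ and the normal-crossing/cusp structure of $\D$, and reading off simple ramification at the generic point of $\D$) and is correct.
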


\section{Components of the universal discriminant locus}
\label{sec:discriminant_locus}

Let $P = t^n + q_1t^{n-1} + \dots + q_n$ represent an element in $\MSigma$. Consider the discriminant $W(x) = \Disc(P(\cdot, x))$. Recall that $W$ is an $N$-differential, where $N = n(n-1)$, and the divisor of $W$ is equal to the branching divisor of the spectral cover $\hSigma\to \Sigma$ associated with $P$. Generically all zeros of $W$ are simple, thus $\hSigma$ is smooth and the ramification of $\hSigma\to \Sigma$ is simple. If $x$ is a zero of order $2$ of $W$ then there are three possibilities that describe the local behaviour of the cover $\hSigma\to\Sigma$; we will follow the notation of~\cite{KorotkinZograf}:

1)  There is one simple ramification point of $\hSigma\to \Sigma$ over $x$ and $\hSigma$ has a node (normal crossing) at this point. We call the locus of such covers \textbf{the ``boundary''}.

2) The cover $\hSigma\to \Sigma$ has two simple ramification points of order $2$ over $x$ and $\hSigma$ is smooth at these points. We call the locus of such covers \textbf{the ``Maxwell stratum''}. 

3) The cover $\hSigma\to \Sigma$ has a ramification point of order $3$ over $x$ and is $\hSigma$ is smooth at this point. We call the locus of such covers \textbf{the ``caustic''}.

Let $\cQuad$ be the moduli space of pairs $(\Sigma, W)$, where $\Sigma$ is a curve of genus $g$ and $W$ is an $N$-differential on it (or a section of $\omega_{\Sigma}^{\otimes N}$ if $\Sigma$ is not smooth). The map $P\mapsto \Disc(P)$ gives rise to a map $\Disc:\cMhit\to \cQuad$. Let $\Ddeg$ be the divisor in $\cQuad$ parametrizing pairs $(\Sigma, W)$, where $W$ has multiple zeros. The locus $\Disc^{-1}(\Ddeg)$ has three components $\cnod$, $\cMax$ and $\ccau$ in accordance with the three possibilities described above. Put $\cDW = \Disc^*\Ddeg$. A local analysis (see~\cite{KorotkinZograf}) yields $\cDW = \cnod + 2\cMax + 3\ccau$ (alternatively, one can use Lemma~\ref{lemma:hB_is_smooth} to show this). We call the divisor $\cDW$ the \emph{universal Hitchin's discriminant}. Note that $\cDW$, $\cnod$, $\cMax$ and $\ccau$ are equivariant under the action of $\mathbb C^*$ on $\cMhit$. Therefore, we can define their projectivizations $P\cDW$, $P\cnod$, $P\cMax$ and $P\ccau$ that are divisors in $P\cMhit$. Our goal is to represent the classes of $P\cnod$, $P\cMax$ and $P\ccau$ as linear combinations of standard generators of $\Pic(P\cMhit)\otimes \mathbb Q$.

\subsection{Generators of $\Pic(\cMhit)\otimes \mathbb Q$}
\label{subsec:generators_of_Pic}

Put 
\begin{equation}
  \phi= c_1(\L),
  \label{eq:def_of_phi}
\end{equation}
where $\L\to P\cMhit$ is the tautological line bundle associated with the action of $\mathbb C^*$ on $\cMhit$. By construction, the space $P\cMhit$ is a bundle over $\cM$ whose fibers are weighted projective spaces. Therefore $\Pic(P\cMhit)\otimes \mathbb Q$ is generated by the class $\phi$ and the pullbacks of the generators of $\Pic(\cM)\otimes \mathbb Q$. Classically, the standard set of generators of $\Pic(\cM)\otimes \mathbb Q$ consists of the Hodge class $\lambda$ and the classes of boundary divisors $\delta_0,\dots, \delta_{[g/2]}$ (see~\cite{ARBARELLO1987153}). We will keep the same notation for the pullbacks of these classes to $P\cMhit$. Let $\delta = \sum_{j = 0}^{[g/2]}\delta_j$ denote the full boundary class. Let $\nu: \cMpt\to \cM$ be the universal curve and let $\omega_{\nu}$ denote the relative dualizing sheaf. Pulling the Mumford's formula for $\nu_*c_1(\omega_{\nu})^2$ to $P\cMhit$ we get
\begin{equation}
  \pi_*\psi^2 = 12\lambda - \delta
  \label{eq:Mumford_for_cMhit}
\end{equation}
where $\pi$ and $\psi$ where defined in Section~\ref{sec:space_of_covers}.

\subsection{Expansion of the classes of components of the universal discriminant locus}
\label{subsec:formulas_for_three_divisors}

In this section we will prove Theorem~\ref{thma:formulas_for_three_divisors}. The following lemma is straightforward:
\begin{lemma}
  Let $X$ be a complex orbifold, let $h: L\to X$ be a line bundle and $n$ be an integer. Consider the factor space
  \begin{equation*}
    L_{(n)} = \{(v,\alpha)\in L\times \mathbb C\}/_{\sim}
  \end{equation*}
   modulo the relation $(\xi v, \alpha)\sim (v,\xi^n \alpha)$ that holds for any $\xi\in \mathbb C^*$. Then $L_{(n)}$ is a complex orbifold, the projection $L_{(n)}\to X$ given by $(v,\alpha)\mapsto h(v)$ is a line bundle on $X$ and the map $(v,\alpha)\mapsto \alpha\cdot v^{\otimes n}$ is an isomorphism between $L_{(n)}$ and $L^{\otimes n}$.
  \label{lemma:construction_of_Fn}
\end{lemma}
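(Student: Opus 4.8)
The plan is to verify the three assertions locally, over an open cover of $X$ trivializing $L$, and then to glue. I would choose an open cover $\{U_i\}$ of $X$ together with trivializations of $L$, with transition functions $g_{ij}\in \mathcal O^*(U_i\cap U_j)$, so that a point of $L|_{U_i}$ is a pair $(x,v)$ with $v\in\mathbb C$ and the fibre coordinates are related on overlaps by $v_i = g_{ij}v_j$. Over $U_i$ the space $L\times\mathbb C$ becomes $U_i\times\mathbb C^2$ with fibre coordinates $(v,\alpha)$, and a direct check shows that the equivalence relation generated by $(\xi v,\alpha)\sim(v,\xi^n\alpha)$ is exactly the orbit relation of the $\mathbb C^*$-action $\xi\cdot(v,\alpha)=(\xi v,\xi^{-n}\alpha)$ of weight $(1,-n)$ on each fibre $\mathbb C^2$.

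First I would describe the quotient fibrewise. The function $w=\alpha v^n$ is $\mathbb C^*$-invariant, and the ring of invariant regular functions on the fibre $\mathbb C^2$ equals $\mathbb C[\alpha v^n]$; hence the good (categorical) quotient of the fibre is the affine line $\mathbb C$ with coordinate $w$, and this is the meaning of $L_{(n)}$ that makes the statement correct. The assignment $(v,\alpha)\mapsto w=\alpha v^n$ then identifies $L_{(n)}|_{U_i}$ with $U_i\times\mathbb C$, giving it the structure of a (trivial) line bundle over $U_i$; the projection $(v,\alpha)\mapsto h(v)$ is invariant under the relation and is precisely the bundle projection to $U_i$.

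Next I would glue these charts. On an overlap $U_i\cap U_j$ one has $v_i=g_{ij}v_j$, whence
\[
  w_i=\alpha v_i^{\,n}=g_{ij}^{\,n}\,\alpha v_j^{\,n}=g_{ij}^{\,n}\,w_j .
\]
Thus the local fibre coordinates $w_i$ transform by $g_{ij}^{\,n}$, which are exactly the transition functions of $L^{\otimes n}$. This shows simultaneously that the charts $L_{(n)}|_{U_i}$ glue to a global orbifold line bundle and that the globally defined map $(v,\alpha)\mapsto \alpha\cdot v^{\otimes n}$ is an isomorphism $L_{(n)}\xrightarrow{\ \sim\ }L^{\otimes n}$. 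Its well-definedness on $\sim$-classes is immediate, since the two representatives $(\xi v,\alpha)$ and $(v,\xi^n\alpha)$ are sent to $\alpha(\xi v)^{\otimes n}=\xi^n\alpha\,v^{\otimes n}$ and to $\xi^n\alpha\,v^{\otimes n}$, which coincide.

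The only genuine subtlety, and the step I expect to require the most care, is that the $\mathbb C^*$-action is not free along the zero section $v=0$: the point $(0,\alpha)$ with $\alpha\neq0$ is fixed by the group $\mu_n$ of $n$-th roots of unity, and $(0,0)$ is fixed by all of $\mathbb C^*$. Consequently the naive topological quotient is non-Hausdorff, and the content of the lemma is precisely that passing to the good quotient (equivalently, recording only the invariant $\alpha v^n$) repairs this and reproduces $L^{\otimes n}$ on the nose, with the $\mu_n$-stabilizers accounting for the orbifold structure. Once this is granted, the remaining verifications are the routine bookkeeping with transition functions carried out above.
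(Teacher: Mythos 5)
Your proof is correct. The paper offers no argument at all for this lemma --- it is introduced with the phrase ``The following lemma is straightforward'' --- so there is nothing to compare against except the implicit intended verification, which is exactly what you carry out: trivialize $L$ over a cover, observe that the relation generated by $(\xi v,\alpha)\sim(v,\xi^n\alpha)$ is the orbit relation of the weight-$(1,-n)$ action of $\mathbb C^*$ on each fibre $\mathbb C^2$, identify the invariant $w=\alpha v^n$ as the fibre coordinate of the quotient, and glue via $w_i=g_{ij}^{\,n}w_j$, which exhibits the transition functions of $L^{\otimes n}$. Your one substantive addition is the observation that the set-theoretic quotient is non-Hausdorff along $v=0$, so that $L_{(n)}$ must be read as the good quotient (equivalently, as the spectrum of the invariant ring $\mathbb C[\alpha v^n]$ fibrewise) for the statement to be true as written; this is a genuine, if mild, imprecision in the paper's formulation and you are right to flag it. One small correction to your closing remark: once one passes to the good quotient, the fibre $\mathbb C^2/\!/\mathbb C^*\simeq \mathbb{A}^1$ is smooth, so the $\mu_n$-stabilizers along $\{v=0,\ \alpha\neq 0\}$ do not contribute any orbifold structure to $L_{(n)}$ beyond what is already present on $X$; they would only matter for the stack quotient, which is not the object the isomorphism with $L^{\otimes n}$ describes.
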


As above we denote by $\hpi: \hC\to P\cMhit$ the universal Hitchin's spectral curve and by $\pi:\cC\to P\cMhit$ is the universal curve over $P\cMhit$. The branched cover $p: \hC\to \cC$ is given fiber-wise by the projection $\hSigma\to\Sigma$. The divisor $\hB\subset \hC$ denotes the ramification divisor of $p$ and the divisor $\B = p(\hB)\subset \cC$ is the branching divisor of $p$. The class $c_1(\omega_{\pi})$ is denoted by $\psi$ as before.
\begin{lemma}
  The following relation holds in $\Pic(P\cMhit)\otimes \mathbb Q$:
  \begin{equation*}
    \hpi_* (p^*\psi\cdot \Bigl[\hB \Bigr]) = n(n-1)(12\lambda - \delta - 2(g-1)\phi).
  \end{equation*}
  \label{lemma:psi_1_cdot_B}
\end{lemma}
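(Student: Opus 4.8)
The plan is to push the computation down to the base $P\cMhit$ in two stages, exploiting the factorization $\hpi = \pi\circ p$. Since $p^*\psi$ is pulled back along $p$, the projection formula gives
\[
  \hpi_*\bigl(p^*\psi\cdot[\hB]\bigr) = \pi_*\,p_*\bigl(p^*\psi\cdot[\hB]\bigr) = \pi_*\bigl(\psi\cdot p_*[\hB]\bigr),
\]
so the whole problem reduces to identifying the cycle $p_*[\hB]$ on $\cC$ and then evaluating a single pushforward along $\pi$.

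First I would check that $p_*[\hB] = [\B]$. By Lemma~\ref{lemma:hB_is_smooth}(4) the ramification of $p$ is simple at a generic point; hence over a generic point of the branching divisor $\B$ there is exactly one ramification point, and the restriction $p|_{\hB}\colon\hB\to\B$ is birational. Consequently $p_*[\hB] = [\B]$, and the task becomes the evaluation of $\pi_*(\psi\cdot[\B])$.

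The key step, and the one I expect to be the main obstacle, is to express $[\B]$ through $\psi$ and $\phi$. The divisor $\B$ is the zero locus of the universal discriminant $W=\Disc(P)$ on $\cC$, so $[\B]=c_1\bigl(\mathcal{O}_{\cC}(\B)\bigr)$ is the first Chern class of the line bundle of which $W$ is a section, and two contributions enter. Fiberwise $W$ is an $N$-differential with $N=n(n-1)$, which accounts for a factor $\omega_{\pi}^{\otimes N}$ and hence $N\psi$. The $\mathbb C^*$-weight of $W$ records the dependence on $\L$: since the roots scale linearly under $(\xi\cdot P)(t)=\xi^nP(\xi^{-1}t)$, the discriminant scales by $\xi^{n(n-1)}$, i.e.\ $\Disc$ is isobaric of weight $n(n-1)$ when $q_j$ is assigned weight $j$, and the construction of Lemma~\ref{lemma:construction_of_Fn} identifies the corresponding twist as $\pi^*\L^{\otimes -N}$. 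Tracking this weight carefully through the projectivization is the delicate point; the outcome is
\[
  [\B] = N\psi - N\pi^*\phi = n(n-1)\bigl(\psi - \pi^*\phi\bigr).
\]

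Finally I would substitute and compute. Writing $\phi$ for $\pi^*\phi$ and using the projection formula once more,
\[
  \pi_*\bigl(\psi\cdot[\B]\bigr) = N\,\pi_*\psi^2 - N\,\pi_*\bigl(\psi\cdot\pi^*\phi\bigr) = N\,\pi_*\psi^2 - N\,(\pi_*\psi)\,\phi .
\]
Here $\pi_*\psi^2 = 12\lambda-\delta$ by the Mumford relation~\eqref{eq:Mumford_for_cMhit}, while $\pi_*\psi = 2g-2$ because $\omega_{\pi}$ has fiberwise degree $2g-2$. Substituting gives
\[
  \pi_*\bigl(\psi\cdot[\B]\bigr) = n(n-1)\bigl(12\lambda-\delta\bigr) - n(n-1)(2g-2)\,\phi = n(n-1)\bigl(12\lambda-\delta-2(g-1)\phi\bigr),
\]
which is the asserted identity. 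The only inputs beyond formal manipulation are the simple-ramification fact of Lemma~\ref{lemma:hB_is_smooth}, the weight bookkeeping of Lemma~\ref{lemma:construction_of_Fn}, and the Mumford relation.
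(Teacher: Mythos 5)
Your proposal is correct and follows essentially the same route as the paper: reduce via the projection formula and the birationality of $p|_{\hB}\colon\hB\to\B$ to $\pi_*(\psi\cdot[\B])$, identify $[\B]=n(n-1)(\psi-\pi^*\phi)$ by viewing $\Disc(P)$ as a section of $\omega_{\pi}^{\otimes n(n-1)}$ twisted by the weight-$n(n-1)$ power of $\L$ (the paper packages this as the homomorphism $h\colon\pi^*\L_{(n(n-1))}\to\omega_{\pi}^{\otimes n(n-1)}$ with $\div h=\B$), and finish with $\pi_*\psi^2=12\lambda-\delta$ and $\pi_*\psi=2g-2$. No gaps.
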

\begin{proof}
  It follows from Lemma~\ref{lemma:hB_is_smooth} and the construction of $\hD$ that the projection $\hB\to \B$ is of degree one. Therefore,
  \begin{equation}
    \hpi_*\Bigl[ p^*\psi\cdot \hB \Bigr] = \pi_*p_*\Bigl[ p^*\psi\cdot \hB \Bigr] = \pi_*\Bigl[ \psi\cdot p_*\hB \Bigr] = \pi_*\Bigl[ \psi\cdot \B \Bigr]
    \label{eq:psi_1_cdot_B_part_1}
  \end{equation}
  Define the map $h: \pi^*\L_{(n(n-1))}\to \omega_{\pi}^{\otimes n(n-1)}$ (cf. Lemma~\ref{lemma:construction_of_Fn}) as follows. Let $(\Sigma, x, P)\in \cC$, so that $P\in \L|_{(\Sigma, [P])}$. Set $h(P,\xi) = \xi\cdot \Disc(P)|_x$. Lemma~\ref{lemma:construction_of_Fn} implies that $h(P,\xi)$ is well-defined and depends linearly of $(P, \xi)\in \L_{(n(n-1))}$. Moreover, we have 
  \begin{equation*}
    \div\, h = \B.
  \end{equation*}
  Therefore
  \begin{equation}
    \pi_*\Bigl[ \psi\cdot \B \Bigr] = \pi_*\Bigl[ \psi\cdot (n(n-1)(\psi-\pi^*\phi)) \Bigr] = n(n-1)(12\lambda - \delta - 2(g-1)\phi)
    \label{eq:psi_1_cdot_B_part_2}
  \end{equation}
 where we used~\eqref{eq:Mumford_for_cMhit} in the last equation. Combining~\eqref{eq:psi_1_cdot_B_part_1} and~\eqref{eq:psi_1_cdot_B_part_2} we get the result.
\end{proof}

\begin{proof}[Proof of Theorem~\ref{thma:formulas_for_three_divisors}]
  Let $(\Sigma, x, P, v)$ be a point in $\hB$, so that $(x,v)\in \hSigma$ is a ramification point of $\hSigma\to \Sigma$. Let $P\hDW\subset \hB$ denote the closure of the locus in $\hB$ that parametrizes those $(\Sigma, x, P, v)$ for which $x$ is a double zero of $\Disc(P)$. Then $\hpi(P\hDW) = \supp(P\cDW)$ (for a divisor $D = a_1D_1 + \dots + a_kD_k$ we denote the support by $\supp(D) = D_1\cup\dots\cup D_k$). The divisor $P\hDW$ splits into three components $P\hnod$, $P\hMax$ and $P\hcau$ in accordance with the three possibilities described in the beginning of Section~\ref{sec:discriminant_locus}. We have $\hpi_*P\hnod = P\cnod$, $\hpi_*P\hMax = 2P\cMax$ and $\hpi_*P\hcau = P\ccau$.
  
  Now let $(\Sigma, x_0, P, v_0)$ be a generic point in $\hB$. Without loss of generality we can assume that $\Sigma$ is smooth at $x_0$. Let $U\subset \Sigma$ be a small neighborhood of $x_0$ and $v$ be a holomorphic 1-differential on $U$ such that $v(x_0) = v_0$. Consider the polynomial
  \begin{equation*}
    P(t+v(x), x) = t^n + q_1(x)t^{n-1} + \dots + q_n(x)
  \end{equation*}
   where $x\in U$. Note that we have an equality $\Disc(P(t+v(x),x)) = \Disc(P(t, x))$ for discriminates with respect to $t$, because the discriminant is invariant under an argument shift. Put
  \begin{equation*}
    P_{n-2}(t,x) = t^{n-2} + q_1(x) t^{n-3} + \dots + q_{n-2}(x)
  \end{equation*}
  as in Lemma~\ref{lemma:decomposition_of_Disc}. Let $z$ be a local coordinate on $\Sigma$ at $x_0$ such that $z(x_0)=0$. Since $t=0$ is a zero of the second order of $P(t + v_0, x_0)$ we have $q_{n-1}(x) = O(z(x))$ and $q_n(x) = O(z(x))$. Using the decomposition of the discriminant obtained in Lemma~\ref{lemma:decomposition_of_Disc} we get
  \begin{equation}
    \Disc(P(t+v(x),x)) = -q_n(x)(q_{n-2}(x))^3\,\Disc(P_{n-2}(t,x)) + O(z(x)^2)
    \label{eq:asymptotics_of_Disc}
  \end{equation}
   as $x\to x_0$. Note that the first summand on the right-hand side of~\eqref{eq:asymptotics_of_Disc} has a zero of order $1$ at $x_0$ if the point $(\Sigma, x_0, P, v_0)$ belongs to an non-empty open subset of $\hB$. On the other hand, if the point $(\Sigma, x_0, P, v_0)$ belongs to $P\hDW$, then we have the relation $\Disc(P(t+v(x),x)) = O(z(x)^2)$ as $x\to x_0$, which is equivalent to the condition $q_n(x)(q_{n-2}(x))^3\,\Disc(P_{n-2}(t,x)) = O(z(x)^2)$ on the first summand. Since $q_n(x)(q_{n-2}(x))^3\,\Disc(P_{n-2}(t,x))$ is a product of three differentials we get the following three possibilities for this condition to hold:

  \textbf{1) The formula for $[P\cnod]$.} Assume that $q_n(x) = O(z(x)^2)$ as $x\to x_0$. Then $\Disc(P(t+v(x),x)) = O(z(x)^2)$ as $x\to x_0$ due to~\eqref{eq:asymptotics_of_Disc}. Notice that in this case $v_0$ is a root of order $2$ of $P(t, x_0)$ and $\Disc(P_{n-2}(0,x_0))\neq 0$ in general, thus $(\Sigma, x_0, P, v_0)$ does not belong to $P\hMax$ or $P\hcau$. Therefore, $(\Sigma, x_0, P, v_0)\in P\hnod$. Vice versa, assume that $(\Sigma, x_0, P, v_0)\in P\hnod$. Write $P(v,x) = F(v,x)\,dz^n$, where $F$ is a holomorphic function defined in a neighborhood $T^*U$ of $(x_0,v_0)\in T^*\Sigma$. By the definition $\hSigma\cap T^*U$ is given by the equation $F=0$ in $T^*U$. Since $\hSigma$ is not smooth at $(x_0,v_0)$ we must have $dF(v_0,x_0) = 0$. Because $v_0$ is a root of second order of $P$, we have $dF(v_0, x_0) = \partial_2F(v_0,x_0)$, where $\partial_2$ denotes the partial derivative with respect to the second argument. Now assume that $q_n(x) = f_n(x)\,dz^n$. Then
  \begin{equation}
    dF(v_0, x_0) = \partial_2 F(v_0,x_0) = df_n(x_0).
    \label{eq:partial_derivative_at_node}
  \end{equation}
  Therefore, the equality $dF(v_0,x_0) = 0$ is equivalent to the fact that $q_n(x) = O(z(x)^2)$ as $x\to x_0$. We conclude that the equality $q_n(x) = O(z(x)^2)$ is equivalent to the fact that $(\Sigma, x_0, P, v_0)\in P\hnod$. Introduce the notation
  \begin{equation}
    \Phi(\Sigma, x_0, P, v_0) = dF(v_0, x_0)\,dz^n(x_0)\in (T^*_{x_0}\Sigma)^{\otimes (n+1)}.
    \label{eq:notation_for_derivative}
  \end{equation}
  Since $F(v_0, x_0) = 0$ this is well-defined (i.e. does not depend on the choice of a local coordinate). Note that if $\xi\in \mathbb C^*$ then $\Phi(\Sigma, x_0, \xi\cdot P, \xi v_0) = \xi^n \Phi(\Sigma, x_0, P, v_0)$, where the action of $\mathbb C^*$ is defined by~\eqref{eq:def_of_action_of_Cstar}. It follows from Lemma~\ref{lemma:construction_of_Fn} that $\Phi$ extends to a homomorphism
  \begin{equation}
    \hPhi: \hpi^*\L_{(n+1)}|_{\hB} \to p^*\omega_{\pi}^{\otimes n}|_{\hB}.
    \label{eq:def_of_homo_nod}
  \end{equation}
  defined by $(P, \xi)\mapsto \xi\Phi(\Sigma, x_0, P, v_0)$. Computations made above shows that the vanishing locus of $\hPhi$ coincides with $P\hnod$. Moreover, it is straightforward that $\div\, \hPhi = P\hnod$ so that we have
  \begin{equation}
    P\hnod \equiv (np^*\psi - (n+1)\hpi^*\phi)\cdot \hB
    \label{eq:Phnod_equiv}
  \end{equation}
  in the Chow ring of $\hC$, where we used that $\L_{(n+1)}\simeq L^{\otimes (n+1)}$. Applying Lemma~\ref{lemma:psi_1_cdot_B} we conclude from~\eqref{eq:Phnod_equiv} that
  \begin{equation*}
    [P\cnod] = [\hpi_*P\hnod] = n(n-1)\Bigl( (n+1)(12\lambda-\delta) - 2(g-1)(2n+1)\phi \Bigr).
  \end{equation*}

  \textbf{2) The formula for $[P\cMax]$.} Assume that the equality $\Disc(P_{n-2}(t,x_0)) = 0$ holds. This is equivalent to the fact that $(\Sigma, x_0, P, v_0)\in P\hMax$ by definition of $\hMax$. Introduce the notation 
  \begin{equation*}
    \Phi(\Sigma, x_0, P, v_0) = \Disc(P_{n-2}(t,x_0))\in (T_{x_0}^*\Sigma)^{\otimes (n-2)(n-3)}.
  \end{equation*}
  Note that $\Phi(\Sigma, x_0, P, v_0)$ does not depend on the choice of the differential $v$ (although we used $v$ to define $P_{n-2}$) and we have $\Phi(\Sigma, x_0, \xi\cdot P, \xi v_0) = \xi^{(n-2)(n-3)} \Phi(\Sigma, x_0, P, v_0)$. It follows that $\Phi$ can be extended to a homomorphism
  \begin{equation}
    \hPhi: \hpi^*\L_{((n-2)(n-3))}|_{\hB} \to p^*\omega_{\pi}^{\otimes (n-2)(n-3)}|_{\hB}
    \label{eq:def_of_homo_Max}
  \end{equation}
  defined by $(P, \xi)\mapsto \xi\Phi(\Sigma, x_0, P, v_0)$. We have $\div\,\hPhi = P\hMax$. From here we get that
  \begin{equation}
    P\hMax \equiv (n-2)(n-3)\Bigl( p^*\psi - \hpi^*\phi \Bigr)\cdot \hB
    \label{eq:PhMax_equiv}
  \end{equation}
  in the Chow ring of $\hC$, where by Lemma~\ref{lemma:construction_of_Fn} $\L_{( (n-2)(n-3))}\simeq L^{\otimes (n-2)(n-3)}$. Lemma~\ref{lemma:psi_1_cdot_B} together with the eq.~\eqref{eq:PhMax_equiv} imply that
  \begin{equation*}
    2[P\cMax] = [\hpi_*P\hMax] = n(n-1)(n-2)(n-3)\Bigl( 12\lambda-\delta + 4(g-1)\phi \Bigr).
  \end{equation*}

  \textbf{3) The formula for $[P\ccau]$.} Finally, we consider the case $q_{n-2}(x_0) = 0$ that is equivalent to the fact that $(\Sigma, x_0, P, v_0)\in P\hcau$. Set 
  \begin{equation*}
    \Phi(\Sigma, x_0, P, v_0) = q_{n-2}(x_0)\in (T_{x_0}^*\Sigma)^{\otimes (n-2)}.
  \end{equation*}
  The value $q_{n-2}(x_0)$ is independent of the choice of $v$ and we have $\Phi(\Sigma, x_0, \xi\cdot P, \xi v_0) = \xi^{n-2} \Phi(\Sigma, x_0, P, v_0)$. Hence $\Phi$ can be extended to a homomorphism
  \begin{equation}
    \hPhi: \hpi^*\L_{(n-2)}|_{\hB} \to p^*\omega_{\pi}^{\otimes (n-2)}|_{\hB}
    \label{eq:def_of_homo_cau}
  \end{equation}
  defined by $(P, \xi)\mapsto \xi\Phi(\Sigma, x_0, P, v_0)$ and we have $\div\,\hPhi = P\hcau$. It follows that
  \begin{equation}
    P\hcau \equiv (n-2)\Bigl( p^*\psi - \hpi^*\phi \Bigr)\cdot \hB
    \label{eq:Phcau_equiv}
  \end{equation}
  in the Chow ring of $\hC$, where we use that $\L_{(n-2)}\simeq L^{\otimes (n-2)}$ by Lemma~\ref{lemma:construction_of_Fn}. Lemma~\ref{lemma:psi_1_cdot_B} and~\eqref{eq:Phcau_equiv} imply that
  \begin{equation*}
    [P\ccau] = [\hpi_*P\hcau] = n(n-1)(n-2)\Bigl( 12\lambda-\delta + 4(g-1)\phi \Bigr).
  \end{equation*}
\end{proof}

\section{The Hodge classes on $P\cMhit$.}
\label{sec:hlambda}

In this section we will prove Theorem~\ref{thma:hlambda_formula}. We proceed using the notation introduced in two previous sections.

\begin{lemma}
  The following formula holds in $\Pic(P\cMhit)\otimes\mathbb Q$:
  \begin{equation*}
    [\hpi_* (\hB\cdot \hB)] =  -\frac{n(n-1)}{2}(12\lambda - \delta - 2(g-1)\phi) + \frac{1}{2}([P\cnod] + [P\ccau]).
  \end{equation*}
  \label{lemma:pushforward_of_hB_cdot_hB}
\end{lemma}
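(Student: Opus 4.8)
The plan is to compute the self-intersection $\hpi_*(\hB \cdot \hB)$ by relating the normal bundle of $\hB$ inside $\hC$ to the geometry already analyzed in the proof of Theorem~\ref{thma:formulas_for_three_divisors}. The key observation is that $\hB \cdot \hB = \hB \cdot c_1(\O_{\hC}(\hB))$, and $c_1(\O_{\hC}(\hB))$ restricted to $\hB$ is the first Chern class of the normal bundle $N_{\hB/\hC}$. So the main task is to identify this normal bundle class, push it forward along $\hpi$, and express the result in terms of $\lambda$, $\delta$, $\phi$. First I would produce a section of a suitable line bundle whose vanishing locus cuts out $\hB$ with the correct multiplicity, mirroring the construction of the homomorphisms $\hPhi$ in the previous section; the ramification divisor $\hB$ is defined fiber-wise by $F = \partial_2 F = 0$ on $\hSigma$, where the second condition carves out $\hB$ inside $\hSigma$ (equivalently inside $\hC$). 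Comparing with~\eqref{eq:notation_for_derivative}, the derivative $\partial_2 F$ transforms as a section of $p^*\omega_\pi$ twisted by the tautological bundle, so I expect $[\hB] \equiv (p^*\psi - \hpi^*\phi)$ (up to a factor) in the Chow ring of $\hC$, at least along $\hB$.

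Next I would exploit the factorization~\eqref{eq:asymptotics_of_Disc} of the discriminant that was the engine of the previous proof. The essential algebraic input is Lemma~\ref{lemma:decomposition_of_Disc}: near a generic ramification point the leading behaviour of $\Disc(P)$ is governed by $-q_n q_{n-2}^3 \Disc(P_{n-2})$. The self-intersection $\hB \cdot \hB$ measures, in family, how the ramification divisor moves to second order, and the three factors $q_n$, $q_{n-2}^3$, $\Disc(P_{n-2})$ correspond precisely to the node, caustic, and Maxwell loci. I expect the normal-bundle computation to separate the node and caustic contributions (which appear with the factor $\tfrac12$ in the statement) from the Maxwell contribution, because the self-crossing $\hB \cdot \hB$ double-counts the two branches of $\hSigma$ meeting at a node and the two sheets at a Maxwell point in a way that is governed by whether the defining equation of $\hB$ has a simple or higher-order contact with the discriminant. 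Concretely I would write $\hpi_*(\hB\cdot\hB)$ as a combination of $\hpi_*((p^*\psi - \hpi^*\phi)\cdot \hB)$ and correction terms supported on $P\hnod$ and $P\hcau$, then apply Lemma~\ref{lemma:psi_1_cdot_B} to the first term.

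The matching with the stated formula suggests the following bookkeeping: the term $-\tfrac{n(n-1)}{2}(12\lambda - \delta - 2(g-1)\phi)$ is exactly $-\tfrac12 \hpi_*(p^*\psi\cdot\hB)$ from Lemma~\ref{lemma:psi_1_cdot_B}, evaluated with the factor $p^*\psi$ rather than $(np^*\psi - (n+1)\hpi^*\phi)$, so I would isolate the ``diagonal'' part of $\hB \cdot \hB$ that gives half of that push-forward. The residual terms $\tfrac12([P\cnod] + [P\ccau])$ must then arise as the loci where the two local branches of $\hB$ (or of the ramification data) collide, namely precisely the boundary and caustic strata, each entering with multiplicity governed by the order of contact in~\eqref{eq:asymptotics_of_Disc}. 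I would verify the coefficient $\tfrac12$ by a local model: near a node, $\hB$ is locally a smooth divisor whose self-intersection contributes the node class with a combinatorial factor $\tfrac12$ coming from the two preimage branches, and similarly at a caustic from the order-$3$ ramification.

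The hard part will be making the local second-order analysis rigorous enough to pin down the multiplicities $\tfrac12$ with no stray factor and to confirm that the Maxwell stratum makes no contribution to the correction term, i.e.\ that $\hB$ is smooth and meets itself transversally away from $P\hnod \cup P\hcau$. This is where Lemma~\ref{lemma:hB_is_smooth}(1) (smoothness of $\hB$) and the explicit factorization of Lemma~\ref{lemma:decomposition_of_Disc} must be combined carefully; the risk is a sign or a factor-of-two error in translating the normal-bundle self-intersection into the divisor classes $[P\cnod]$ and $[P\ccau]$ computed in the previous section.
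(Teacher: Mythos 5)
Your proposal does not reach a proof: the two steps that actually carry the argument are both left open. First, you never pin down the class of $\O_{\hC}(\hB)$. You guess ``$[\hB]\equiv(p^*\psi-\hpi^*\phi)$ up to a factor'' from a section cutting out $\hB$, but you describe that section as ``$F=\partial_2 F=0$'', which in the paper's notation (see~\eqref{eq:partial_derivative_at_node}) is the derivative in the \emph{base} direction and cuts out the nodes, not the ramification divisor; the ramification divisor is $F=\partial F/\partial v=0$, the derivative along the fiber of $T^*\Sigma$. Without the correct section and, crucially, its weight under the $\mathbb C^*$-action, you cannot determine the factor, and the entire computation hinges on it. Second, you defer the terms $\tfrac12([P\cnod]+[P\ccau])$ to a ``local second-order analysis'' of the factorization in Lemma~\ref{lemma:decomposition_of_Disc} whose multiplicities you admit you cannot fix; that analysis is never performed, and the heuristic of ``two branches colliding'' does not by itself produce the coefficient $\tfrac12$ or rule out a Maxwell contribution. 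The paper's actual mechanism is entirely different and much cleaner: $\hB$ is smooth and $\hpi|_{\hB}:\hB\to P\cMhit$ is a finite cover of degree $2n(n-1)(g-1)$ whose ramification divisor is exactly $P\hnod+P\hcau$ (two simple branch points of $\hSigma\to\Sigma$ collide over the node and caustic strata, but not over the Maxwell stratum). Comparing the Riemann--Hurwitz expression $c_1(K_{\hB})=\hpi^*c_1(K_{P\cMhit})\cdot\hB+P\hnod+P\hcau$ with the adjunction formula $c_1(K_{\hB})=(c_1(K_{\hC})+\hB)\cdot\hB$, and using $c_1(\omega_{\hpi})=p^*\psi+\hB$ (simple ramification of $p$ along $\hB$), gives $2\,\hB\cdot\hB=-p^*\psi\cdot\hB+P\hnod+P\hcau$, after which Lemma~\ref{lemma:psi_1_cdot_B} finishes the proof. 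This interpretation of $P\hnod+P\hcau$ as a ramification divisor of $\hB\to P\cMhit$ is the missing idea in your write-up.

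That said, the germ of your first paragraph can be salvaged into a genuinely different (and correct) proof. The fiber derivative $\partial P/\partial v$ defines, via Lemma~\ref{lemma:construction_of_Fn}, a homomorphism $\hpi^*\L_{(n-1)}|_{\hC}\to p^*\omega_{\pi}^{\otimes(n-1)}|_{\hC}$ whose divisor is $\hB$ with multiplicity one, so that $[\hB]=(n-1)(p^*\psi-\hpi^*\phi)$ in the Chow ring of $\hC$. Then
\begin{equation*}
\hpi_*(\hB\cdot\hB)=(n-1)\,\hpi_*(p^*\psi\cdot\hB)-(n-1)\,\phi\cdot\hpi_*\hB
= n(n-1)^2(12\lambda-\delta)-4n(n-1)^2(g-1)\phi,
\end{equation*}
using Lemma~\ref{lemma:psi_1_cdot_B} and $\hpi_*\hB=2n(n-1)(g-1)$; one checks that this agrees with the stated right-hand side after substituting the formulas of Theorem~\ref{thma:formulas_for_three_divisors}. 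This route avoids the node/caustic correction terms altogether, but it is not what your proposal does: as written, you neither identify the correct section nor its weight, and the part of the argument you do sketch rests on an analysis you acknowledge is incomplete.
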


\begin{proof}
  Recall that $\hB$ is smooth due to Lemma~\ref{lemma:hB_is_smooth}. The projection $\hB\to P\cMhit$ is a branched cover of degree $2n(n-1)(g-1)$ (equal to the number of zeros of $\Disc(P)$ counted with multiplicities). From the discussion in Section~\ref{sec:discriminant_locus} it follows that the ramification divisor of this branched cover is $P\hnod + P\hcau$. These observations yield the following expression for the canonical class of $\hB$:
  \begin{equation}
    c_1(K_{\hB}) = \hpi^*c_1(K_{P\cMhit})\cdot \hB + P\hnod + P\hcau.
    \label{eq:KhB_first}
  \end{equation}
  Another expression for the canonical class of $\hB$ comes from the adjunction formula:
  \begin{equation}
    c_1(K_{\hB}) = (c_1(K_{\hC}) + \hB)\cdot \hB.
    \label{eq:KhB_second}
  \end{equation}
  Using these two expressions we get
  \begin{equation}
    \hB\cdot\hB = ( \hpi^*c_1(K_{P\cMhit}) - c_1(K_{\hC}) )\cdot \hB + P\hnod + P\hcau = - c_1(\omega_{\hpi})\cdot \hB + P\hnod + P\hcau
    \label{eq:hB_cdot_hB_via_conclasses}
  \end{equation}
  where $\omega_{\hpi}$ is the relative dualizing sheaf. Recall that the map $p: \hC\to \cC$ is a branched cover with a simple ramification along $\hB$ due to Lemma~\ref{lemma:hB_is_smooth}. Therefore,
  \begin{equation}
    c_1(\omega_{\hpi}) = p^*\psi + \hB.
    \label{eq:omega_hpi_is_pullback}
  \end{equation}
  Substituting this expression for $c_1(\omega_{\hpi})$ into~\eqref{eq:hB_cdot_hB_via_conclasses} we find that
  \begin{equation}
    2\hB\cdot\hB = - p^*\psi\cdot \hB + P\hnod + P\hcau.
    \label{eq:2hB_cdot_hB}
  \end{equation}
  Applying $\hpi_*$ to this equation and using Lemma~\ref{lemma:psi_1_cdot_B} we get the statement of the lemma.
\end{proof}

\begin{lemma}
  The following formula holds in $\Pic(P\cMhit)\otimes\mathbb Q$:
  \begin{equation}
    \hpi_*c_1(\omega_{\hpi})^2 = 6n(3n-1)\lambda - 3n(n-1)(g-1)\phi - \frac{n(3n-1)}{2}\,\delta + \frac{1}{2}\left([P\cnod] + [P\ccau]\right).
    \label{eq:pushforward_of_omega_hpi2}
  \end{equation}
  \label{lemma:pushforward_of_omega_hpi2}
\end{lemma}

\begin{proof}
  Using~\eqref{eq:omega_hpi_is_pullback} we can write
  \begin{equation*}
    \hpi_*c_1(\omega_{\hpi})^2 = \hpi_* \Bigl(p^*\psi^2 + 2p^*\psi\cdot \hB + \hB\cdot \hB\Bigr) = n\pi_*\psi^2 +  \hpi_* \Bigl(2p^*\psi\cdot \hB + \hB\cdot \hB\Bigr).
  \end{equation*}
  Combining this formula with the eq.~\eqref{eq:Mumford_for_cMhit}, Lemma~\ref{lemma:psi_1_cdot_B} and Lemma~\ref{lemma:pushforward_of_hB_cdot_hB} we get the desired eq.~\eqref{eq:pushforward_of_omega_hpi2}.
\end{proof}

\begin{proof}[Proof of Theorem~\ref{thma:hlambda_formula}]
  Let $\Vn\subset \hC$ denote the locus of nodal points of fibers of $\hpi$, i.e.
  \begin{equation}
    \Vn = \{(\Sigma, x, P, v)\ \mid\ (x,v)\text{ is a node of $\hSigma$}\}.
    \label{eq:def_of_Vn}
  \end{equation}
  Note that $P\hnod$ is a component of $\Vn$, and we have
  \begin{equation}
    \hpi_*\Vn = n\delta + [P\cnod].
    \label{eq:pushforward_of_Vn}
  \end{equation}
  Now let us apply the Grothendieck-Riemann-Roch formula to the structure sheaf $\O_{\hC}$ and the morphism $\hpi: \hC\to P\cMhit$. Taking the degree one components of both sides of the formula we get the following relation in $\Pic(P\cMhit)\otimes\mathbb Q$:
  \begin{equation}
    12\hlambda = \hpi_* (c_1(\omega_{\hpi})^2 + \Vn).
    \label{eq:GRR_for_structure_sheaf}
  \end{equation}
  Using Lemma~\ref{lemma:pushforward_of_omega_hpi2} and~\eqref{eq:pushforward_of_Vn} we conclude that
  \begin{equation}
    12\hlambda = 6n(3n-1)\lambda - 3n(n-1)(g-1)\phi - \frac{n(3n-3)}{2}\,\delta + \frac{3}{2}[P\cnod] + \frac{1}{2}[P\ccau]
    \label{eq:GRR_intermediate}
  \end{equation}
  The formulas of Theorem~\ref{thma:formulas_for_three_divisors}
  \begin{align*}
    & [P\cnod] = n(n-1)\Bigl( (n+1)(12\lambda-\delta) - 2(g-1)(2n+1)\phi \Bigr)\\
    & [P\ccau] = n(n-1)(n-2)\Bigl( 12\lambda-\delta - 4(g-1)\phi \Bigr).
  \end{align*}
  together with the eq.~\eqref{eq:GRR_intermediate} give the desired formula for $\hlambda$.
\end{proof}

\bibliographystyle{habbrv} 
\bibliography{smth}

\end{document}